\newcommand*{\C}{\mathrel{\mathsf{C}}}
\newcommand*{\D}{\mathrel{\mathsf{D}}}
\newcommand*{\pp}{\mathrel{\prec\hspace{-4pt}\prec}}
\numberwithin{equation}{section}
\spnewtheorem{thm}{Theorem}[section]{\bfseries}{\itshape}
\spnewtheorem{prp}[thm]{Proposition}{\bfseries}{\itshape}
\spnewtheorem{cor}[thm]{Corollary}{\bfseries}{\itshape}
\spnewtheorem{dfn}[thm]{Definition}{\bfseries}{\rmfamily}
\spnewtheorem{xpl}[thm]{Example}{\bfseries}{\rmfamily}
\spnewtheorem{rmk}[thm]{Remark}{\bfseries}{\rmfamily}
\author{Tristan Bice \and Charles Starling}
\institute{Tristan Bice \at Institute of Mathematics of the Czech Academy of Sciences, \v{Z}itn\'a 25, Prague, Czech Republic \\ \email{tristan.bice@gmail.com}
\and Charles Starling \at Carleton University\\ \email{cstar@math.carleton.ca}}
\title{Locally Hausdorff Tight Groupoids Generalised\thanks{Tristan Bice is supported by the GA\v{C}R project EXPRO 20-31529X and RVO: 67985840.\\  Charles Starling is supported by a Carleton University internal research grant.}}
\begin{document}

\begin{abstract}
We extend Exel's ample tight groupoid construction to non-ample groupoids, even in the general locally Hausdorff case.
\keywords{tight groupoid \and \'etale groupoid \and locally compact \and Stone duality}
\subclass{03C65 \and 06E15 \and 06E75 \and 06B35 \and 54D45 \and 54D70 \and 54D80}
\end{abstract}

\maketitle

\section*{Introduction}

\subsection*{Motivation}

Point-free topology traces its origins to the classic Stone and Wallman dualities from the late 1930's (see \cite{Stone1936} and \cite{Wallman1938}), the basic idea being that certain topological spaces can be encoded by more algebraic/order theoretic structures.  In one direction, one typically takes a sublattice of basic open sets of a given space to obtain such an order structure.  In the other direction, one usually takes certain filters in a given lattice to obtain the points of a corresponding topological space.

More recently, over the past decade or so, non-commutative extensions of these dualities have to come the fore.  Here the idea is that when the topological space carries some compatible groupoid structure (such in the \'etale groupoids commonly arising in dynamical systems and operator algebras) this can be encoded with some extra semigroup structure.  This is particular true for ample/totally disconnected groupoids, where Lawson's non-commutative Stone duality (see \cite{Lawson2012}) and Exel's tight groupoid construction (see \cite{Exel2008}) have played a big role.

Our previous work in \cite{BiceStarling2018} and \cite{BiceStarling2020HTight} has focused on extending these further to non-ample locally compact \'etale groupoids.  In particular, in \cite{BiceStarling2020HTight} we extended Exel's tight groupoid construction to non-ample groupoids in the Hausdorff case.  When Exel's construction produces a non-Hausdorff groupoid $G$, our construction in \cite{BiceStarling2020HTight} still applies but instead produces a kind of `Hausdorffification' of $G$.  On the one hand, this is advantageous because Hausdorff spaces are generally easier to work with.  But on the other hand, it would be nice if we could construct more general locally Hausdorff spaces and groupoids in a similar way.  The goal of this paper is to show that this can indeed be achieved by `localising' our previous construction in an appropriate manner.

\subsection*{Outline}

We start in \S\ref{ABP} with a brief review pseudobases and an investigation of stronger bi-pseudobases.  In \S\ref{LTF}, we move on to examine the locally tight spectrum of locally tight filters in local bi-pseudobases.  The key result here is Theorem \ref{HausdorffEquivalents}, which says the locally tight spectrum of $P$ is Hausdorff iff it agrees with the tight spectrum iff $P$ is a bi-pseudobasis.  As mentioned after the proof, this generalises the Hausdorff characterisation of Exel's tight groupoid from \cite[Theorem 3.16]{ExelPardo2016}.

Next we show in \S\ref{CLBP} that abstract local bi-pseudobases can indeed be represented as appropriately defined concrete local bi-pseudobases.  Conversely, Theorem \ref{Recovery} says that we can always recover a space from a from a given concrete local bi-pseudobasis via the locally tight spectrum, thus providing a duality which encompasses various Stone duality extensions.

In \S\ref{OG} we discuss ordered groupoids in a slightly more general context than usual (with transitive relations rather than preorders or partial orders) and how they relate to inverse semigroups.  We then examine cosets and their groupoid structure in \S\ref{TheCosetGroupoid}, returning to focus on locally tight filters in \S\ref{LTG}.  The key results here are Theorem \ref{LGetale}, which shows that the locally tight spectrum is an \'etale groupoid, and Theorem \ref{GroupoidRecovery}, which shows conversely how to recover an \'etale groupoid, generalising \cite[Theorem 4.8]{Exel2010}.

Let us point out that the first half (\S\ref{ABP}-\S\ref{CLBP}) is purely order theoretic/topological, as opposed to the more algebraic second half (\S\ref{OG}-\S\ref{LTG}).  This continues the order theoretic approach initiated in \cite{Lenz2008} and maintained in several subsequent papers (see \cite{LawsonLenz2013} and \cite{LawsonMargolisSteinberg2013}).  We hope this not only clarifies the topological aspect of the locally tight groupoid construction but also makes it more accessible to topologists.

\newpage

The following diagram summarises how this paper fits into previous Stone duality extensions, going down in increasing generality.\\
\\
\\
\\

\hspace{-25pt}\begin{tikzpicture}[scale=3]
    \node (Stone) at (0,0) [align=center]{Stone (1936) \cite{Stone1936}\\ Boolean Algebras\\ All Clopen Subsets\\ Compact Hausdorff $0$-Dimensional Spaces};
	  \node (De Vries) at (-1.5,-1) [align=center]{De Vries (1962) \cite{DeVries1962}\\ Compingent Algebras\\ Regular ${}^{c\circ}$-$\overline{\cup}^\circ$-$\cap$-Bases\\ Compact Hausdorff Spaces};
		\node (Wallman) at (0,-1) [align=center]{Wallman (1938) \cite{Wallman1938}\\ Normal Lattices\\ $\cup$-$\cap$-Bases\\ Compact Hausdorff Spaces};
		\node (Lawson) at (1.5,-1) [align=center]{Lawson (2012) \cite{Lawson2012}\\ Boolean Inverse Semigroups\\ All Compact Open Bisections\\ Hausdorff Ample Groupoids};
		\node (Shirota) at (-1.5,-2) [align=center]{Shirota (1952) \cite{Shirota1952}\\ R-Lattices\\ Regular $\overline{\cup}^\circ$-$\cap$-Bases\\ Locally Compact Hausdorff Spaces};
		\node (BiceStarling) at (1.5,-2) [align=center]{Bice-Starling (2019) \cite{BiceStarling2018}\\ Basic Inverse Semigroups\\ $\cup$-Bases\\ \'Etale Groupoids};
		\node (BiceStarling2) at (-.75,-3) [align=center]{Bice-Starling (2020) \cite{BiceStarling2020HTight}\\ Pseudobasic Inverse Semigroups\\ Pseudobases\\ Hausdorff \'Etale Groupoids};
		\node (BiceStarling3) at (1.5,-3) [align=center]{[This Paper]\\ Local Bi-Pseudobasic Ordered Groupoids\\ Local Bi-Pseudobases\\ Locally Hausdorff \'Etale Groupoids};
		
		\draw (Stone)--(De Vries)--(Shirota)--(BiceStarling2);
		\draw (Shirota)--(BiceStarling3);
		\draw (Stone)--(Wallman)--(BiceStarling)--(BiceStarling3);
		\draw (Wallman)--(BiceStarling2);
		\draw (Lawson)--(BiceStarling2);
		\draw (Stone)--(Lawson)--(BiceStarling);
\end{tikzpicture}

\newpage

\section{Abstract Bi-Pseudobases}\label{ABP}

In \cite{BiceStarling2020HTight}, we exhibited a kind of duality between abstract and concrete `pseudobases'.  A concrete pseudobasis just is a family of open sets satisfying several basis-like properties \textendash\, see \cite[Definition 2.4]{BiceStarling2020HTight}.  In particular, every basis of a locally compact space is indeed a concrete pseudobases, but in general the converse is false.  The motivation for considering more general pseudobases is that they still provide enough structure to recover the space (at least in the Hausdorff case), while at the same time admitting a simpler order theoretic characterisation in terms of the cover relation $\C$, which naturally leads to the notion of an abstract pseudobasis.

To obtain the desired locally Hausdorff extension of our previous results, we need to consider a stronger variant of pseudobases, namely `bi-pseudobases', as well as their natural `localisation'.  We start by considering the abstract structures and follow up with their concrete counterparts below in \S\ref{CLBP}.

First let us recall some general notational conventions from \cite{BiceStarling2020HTight}.

\begin{dfn}
For any $Q\subseteq P$ and $\prec\ \subseteq P\times P$, we define
\[Q^\prec=\{p\in P:Q\ni q\prec p\}.\]
We extend $\prec$ to the `refinement' relation on $\mathscr{P}(P)=\{Q:Q\subseteq P\}$ by defining
\[Q\prec R\qquad\Leftrightarrow\qquad Q\subseteq R^\succ\qquad\Leftrightarrow\qquad\forall q\in Q\ \exists r\in R\ (q\prec r).\]
From $\prec$ we define relations $\D$ and $\C$ on $\mathscr{P}(P)$ by
\begin{align}
\label{DenseCover}\tag{Dense Cover}Q\D R\qquad&\Leftrightarrow\qquad\forall q\prec Q\ \exists r\prec R\ (r\prec q).\\
\label{CompactCover}\tag{Compact Cover}Q\C R\qquad&\Leftrightarrow\qquad\exists\text{ finite }F\ (Q\D F\prec R).
\end{align}
\end{dfn}

\begin{rmk}
Here we are imagining that $P$ is a collection of open sets of a space $X$ and $\prec$ is `compact containment', i.e. $p\prec q$ means that we have a compact subset $C$ of $X$ with $p\subseteq C\subseteq q$.  The compact cover relation $\C$ is meant to extend compact containment to unions, i.e. $Q\C R$ should mean that we have a compact subset $C$ of $X$ with $\bigcup Q\subseteq C\subseteq\bigcup R$.
\end{rmk}

We refer the reader to \cite[Proposition 2.14]{BiceStarling2020HTight} for basic properties of $\D$ and $\C$.

\begin{dfn}\label{PseudoDefinitions}
Assume $\prec$ is a transitive round relation on $P$, i.e. for all $p\in P$,
\[\tag{Transitive + Round}p^{\succ\succ}\subseteq p^\succ\neq\emptyset.\]
We then call $(P,\prec)$ an \emph{abstract (local) (bi-)pseudobasis} if, for all $p,q\in P$,
\begin{align}
\label{Pseudobasis}\tag{Pseudobasis}&\forall p'\prec p&p'^\succ&\C p^\succ.\\
\label{BiPseudobasis}\tag{Bi-Pseudobasis}\forall p'\prec p\ &\forall q'\prec q&(p'^\succ\cap q'^\succ)&\C(p^\succ\cap q^\succ).\\
\label{LocalBiPseudobasis}\tag{Local Bi-Pseudobasis}\forall r,s\preceq p\ \forall r'\prec r\ &\forall s'\prec s&(r'^\succ\cap s'^\succ)&\C(r^\succ\cap s^\succ).
\end{align}
Here $\preceq$ denotes the \emph{lower preorder} defined from $\prec$ by
\[p\preceq q\qquad\Leftrightarrow\qquad p^\succ\subseteq q^\succ.\]
\end{dfn}

\begin{rmk}
Note $\prec$ itself is a preorder iff $\prec\ =\ \preceq$.  In this case it suffices to consider $p'=p$ and $q'=q$ above, in which case \eqref{BiPseudobasis} is called the `weak meet condition' and the resulting structure a `weak semilattice' in \cite[\S1 Preliminaries]{LawsonLenz2013} and \cite{Steinberg2010} (in the comments just before Theorem 5.17).

Indeed, up until now, the tight groupoid construction has only been considered for the canonical partial order $\leq$ on an inverse semigroup.  However, in more general *-semigroups, the canonical order is only a transitive relation and it is vital to be able to handle these in the same way if we want to unify the tight groupoid construction with the Weyl groupoid construction \textendash\, see \cite{Bice2019a}.
\end{rmk}

From Definition \ref{PseudoDefinitions} we immediately see that
\[\eqref{BiPseudobasis}\qquad\Rightarrow\qquad\eqref{LocalBiPseudobasis}\qquad\Rightarrow\qquad\eqref{Pseudobasis}.\]
All these conditions are also immediately seen to be `downwards hereditary', i.e. if $P$ is an abstract (local) (bi-)pseudobasis and $Q^\succ\subseteq Q\subseteq P$ then $Q$ is also an abstract (local) (bi-)pseudobasis.

Also note \eqref{Pseudobasis} is saying every cover has a `shrinking', i.e.
\begin{align}
\label{Shrinking}\tag{Shrinking}p'\prec p\qquad&\Rightarrow\qquad\exists\text{ finite }F\ (p'\C F\prec p)\\
\intertext{In fact, as shown in \cite[Proposition 2.3]{BiceStarling2020HTight}, \eqref{Shrinking} can be strengthened to}
\label{CShrinking}\tag{$\C$-Shrinking}Q'\C Q\qquad&\Rightarrow\qquad\exists\text{ finite }F\ (Q'\C F\prec Q),
\end{align}
i.e. $Q^\succ\C R^\succ$ whenever $Q\C R$.

As the name suggests, any abstract pseudobasis $P$ can be realised as a concrete pseudobasis of some locally compact Hausdorff space, the points of the space being the tight subsets of $P$.  To define these, first let $\widehat{Q}$ (which corresponds to $Q^\wedge$ in \cite{Lawson2012}) denote the \emph{formal meet} of any $Q\subseteq P$ given by
\[\widehat{Q}=\bigcap_{q\in Q}q^\succ\]

\begin{dfn}\label{TightDefinition}
We call $T\subseteq P$ \emph{tight} if $T=T^\prec$ and $\widehat{F}\not\C P\setminus T$, for all finite $F\subseteq T$.
\end{dfn}

In other words, $T$ is tight iff $T$ is round ($T\subseteq T^\prec$) and, for all finite $F,G\subseteq P$,
\begin{equation}\label{Tight-Round}
F\subseteq T\quad\text{and}\quad G\prec P\setminus T\qquad\Rightarrow\qquad\widehat{F}\cap G^\perp\neq\emptyset.
\end{equation}
Here $G^\perp=\{q\in Q:G\perp q\}$ where $\perp$ denotes the disjoint relation defined by
\[R\perp Q\qquad\Leftrightarrow\qquad R^\succ\cap Q^\succ=\emptyset.\]

In most constructions of topological spaces from order structures, the points are defined to be certain kinds of filters, i.e. subsets $T$ satisfying
\begin{align}
\tag{Filter}t,u\in T\qquad&\Leftrightarrow\qquad\exists s\in T\ (s\prec t,u)\\
\intertext{Equivalently, $T$ is a filter iff $T$ is an \emph{up-set}, i.e. $T^\prec\subseteq T$, and \emph{(down-)directed}, i.e.}
\tag{Directed}t,u\in T\qquad&\Rightarrow\qquad\exists s\in T\ (s\prec t,u).\\
\intertext{This includes Exel's tight spectrum and the more general locally tight spectrum we consider in the present paper.  However, it is important to note that the tight subsets defined above need only satisfy the weaker condition $T=T^\prec$, i.e.}
\tag{Round Up-Set}t\in T\qquad&\Leftrightarrow\qquad\exists s\in T\ (s\prec t).
\end{align}
Indeed, it is crucial to consider these more general non-filter round up-sets when dealing with general pseudobases, as we did in \cite{BiceStarling2020HTight}.

However, in bi-pseudobases, tight subsets will automatically be filters.  Moreover, this fact characterises bi-pseudobases among pseudobases.

\begin{thm}\label{MeetPreservingEquivalents}
If $(P,\prec)$ is an abstract pseudobasis, the following are equivalent.
\begin{align}
\label{MeetPreserving}&&P\text{ is an abstract bi-pseudobasis}.&\\
\label{Tight=>Filter}\forall\ T\subseteq P&&T\text{ is tight}\qquad\Rightarrow\qquad& T\text{ is a filter}.\\
\label{FiniteMeetPreserving}\forall\text{ finite }F,G\subseteq P&&\emptyset\neq G\succ F\qquad\Rightarrow\qquad&\widehat{F}\C\widehat{G}.\\
\label{CMeetPreserving}\forall\ Q,R,S\subseteq P&&Q\C R\quad\text{and}\quad Q\C S\qquad\Rightarrow\qquad& Q\C R^\succ\cap S^\succ.
\end{align}
\end{thm}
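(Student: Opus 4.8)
The plan is to prove the theorem as a cycle of implications, say $\eqref{MeetPreserving}\Rightarrow\eqref{CMeetPreserving}\Rightarrow\eqref{FiniteMeetPreserving}\Rightarrow\eqref{Tight=>Filter}\Rightarrow\eqref{MeetPreserving}$, exploiting that several of these are essentially the same statement phrased for singletons, finite sets, or arbitrary sets. For $\eqref{MeetPreserving}\Rightarrow\eqref{CMeetPreserving}$, I would unfold $Q\C R$ as $Q\D E\prec R$ and $Q\C S$ as $Q\D F\prec S$ for finite $E,F$; the goal is to produce a single finite set covered by $Q$ and refining $R^\succ\cap S^\succ$. The idea is that for each pair $(e,f)\in E\times F$, roundness lets me pick $e'\prec e$, $f'\prec f$ witnessing the relevant density, apply \eqref{BiPseudobasis} to get $(e'^\succ\cap f'^\succ)\C(e^\succ\cap f^\succ)$, and then stitch these finitely many compact covers together using the transitivity/composition properties of $\C$ and $\D$ from \cite[Proposition 2.14]{BiceStarling2020HTight}, together with the fact that $e^\succ\cap f^\succ\prec R^\succ\cap S^\succ$ since $e\prec R$ and $f\prec S$. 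The descent from $Q$ to this union requires knowing $Q\D(\text{union of the }e'^\succ\cap f'^\succ\text{'s})$, which should follow because $Q\D E$, $Q\D F$ and density is preserved under the meet-like operation — this bookkeeping is where I expect the main friction.

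The implication $\eqref{CMeetPreserving}\Rightarrow\eqref{FiniteMeetPreserving}$ should be a specialisation: given finite $F,G$ with $\emptyset\neq G\succ F$, note $\widehat F\C\widehat F$ trivially (via \eqref{Shrinking} extended to finite meets, or directly since $\widehat F=\bigcap_{f\in F}f^\succ$ and each $f^\succ\C f^\succ$), and I would want to arrange two covers of $\widehat F$ that force the conclusion $\widehat F\C\widehat G=\bigcap_{g\in G}g^\succ$. The subtlety is matching $G\succ F$ with the $R^\succ\cap S^\succ$ form; I anticipate needing to observe that $\widehat G\supseteq(\widehat F)^{\ldots}$ in the appropriate refinement sense because every $g\in G$ lies above some $f\in F$, so $g^\succ\supseteq f^\succ$ won't quite hold but $f^\succ\prec g^\succ$ will, and then iterate \eqref{CMeetPreserving} over the finitely many $g\in G$ to intersect all the $g^\succ$. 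For $\eqref{FiniteMeetPreserving}\Rightarrow\eqref{Tight=>Filter}$, take tight $T$; it is already a round up-set, so I only need directedness. Given $t,u\in T$, roundness gives $t'\prec t$, $u'\prec u$ with $t',u'\in T$; I want $s\in T$ with $s\prec t,u$, equivalently $T\cap t^\succ\cap u^\succ\neq\emptyset$ after one more roundness step. Applying \eqref{FiniteMeetPreserving} with suitable finite $F\subseteq T$ and $G$ built from $t,u$ yields a compact cover $\widehat F\C\widehat G$; feeding this into the tightness condition \eqref{Tight-Round} (which says $\widehat F$ cannot be $\C$-covered by $P\setminus T$) forces $\widehat G$ to meet $T$, producing the desired common lower bound.

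Finally, $\eqref{Tight=>Filter}\Rightarrow\eqref{MeetPreserving}$ is the contrapositive-flavoured step: if $P$ is not a bi-pseudobasis, there are $p'\prec p$, $q'\prec q$ with $(p'^\succ\cap q'^\succ)\not\C(p^\succ\cap q^\succ)$, and I would construct a tight subset $T$ containing $p,q$ but containing no common lower bound, contradicting that $T$ is a filter. The construction would start from the round up-set generated by $p$ and $q$ and extend it, using a Zorn/maximality argument among round up-sets $T$ satisfying the tightness non-cover condition and avoiding $p^\succ\cap q^\succ$; the failure of the compact cover is exactly what guarantees this avoidance is compatible with tightness. I expect this last implication to be the genuinely hard part, since building tight subsets with prescribed membership and non-membership constraints is delicate — one must verify $\widehat F\not\C P\setminus T$ is preserved along the maximal chain, and that maximality delivers roundness and the separation $T\cap p^\succ\cap q^\succ=\emptyset$ simultaneously. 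It is plausible the authors instead route through the concrete picture or an auxiliary lemma on extending round subsets to tight ones; if such a lemma is available earlier I would invoke it rather than redo the Zorn argument inline.
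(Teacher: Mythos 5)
Your cycle is a genuinely different decomposition from the paper's. The paper introduces an auxiliary statement \eqref{FiniteCMeetPreserving} (a simultaneous finite-intersection version of \eqref{CMeetPreserving}) which formally implies both \eqref{CMeetPreserving} and \eqref{FiniteMeetPreserving}, each of which formally implies \eqref{MeetPreserving}; the only two implications actually proved are \eqref{MeetPreserving}$\Rightarrow$\eqref{Tight=>Filter} (essentially your argument for \eqref{FiniteMeetPreserving}$\Rightarrow$\eqref{Tight=>Filter}, run with \eqref{BiPseudobasis} in place of \eqref{FiniteMeetPreserving}) and \eqref{Tight=>Filter}$\Rightarrow$\eqref{FiniteCMeetPreserving} by contraposition. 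So all the combinatorial work of passing from the binary condition to arbitrary sets is funnelled through tight filters. You instead propose to make that passage directly, proving \eqref{MeetPreserving}$\Rightarrow$\eqref{CMeetPreserving} by stitching finite covers; this does work: two applications of \eqref{CShrinking} give finite $E'\prec E\prec R$ and $F'\prec F\prec S$ with $Q\D E'$ and $Q\D F'$, one applies \eqref{BiPseudobasis} to each pair $(e',f')\in E'\times F'$ to get finite $H_{e',f'}$ with $(e'^\succ\cap f'^\succ)\D H_{e',f'}\prec e^\succ\cap f^\succ\subseteq R^\succ\cap S^\succ$, and the density bookkeeping (given $x\prec Q$, descend first into some $e'^\succ$, then into some $f'^\succ$, then into $H_{e',f'}$) closes up by transitivity of $\prec$. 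The payoff of your route is that the hard contrapositive step is only needed for a pair of singletons; the cost is doing the cover-stitching by hand, which the paper's routing avoids entirely.

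The one real gap in your sketch is in \eqref{Tight=>Filter}$\Rightarrow$\eqref{MeetPreserving}. ``The round up-set generated by $p$ and $q$'' does not exist in general: $\{p,q\}^\prec$ need not contain $p$ or $q$ (since $\prec$ need not be reflexive), and no up-set containing $p$ and $q$ can be round unless it already contains infinite $\prec$-descending chains below each of them. So the seed for your Zorn/extension argument cannot be obtained by generation and closure; it must be manufactured, and this is where the paper's K\"onig's lemma step is essential rather than optional. Concretely, from $p'\prec p$ with $p'^\succ\cap q'^\succ\not\C p^\succ\cap q^\succ$, \eqref{CShrinking} produces finite $F_{n+1}\prec F_n\prec p$ with $p'\C F_n$, and finiteness of each $F_n$ together with the $\D$/$\C$ composition rules lets one select a $\prec$-decreasing branch $(f_n)$ along which the non-cover condition against $p^\succ\cap q^\succ$ survives; only the union of the two resulting branches is a round set to which the tight-extension result (\cite[Theorem 2.11]{BiceStarling2020HTight} --- the auxiliary lemma you correctly guessed exists) applies, yielding a tight $T$ with $p,q\in T^\prec=T$ but $T\cap p^\succ\cap q^\succ=\emptyset$. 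With that replacing your Zorn step, your cycle goes through.
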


\begin{proof}
Consider the following statement: for all non-empty finite $\Phi,\Psi\subseteq\mathscr{P}(P)$,
\begin{equation}\label{FiniteCMeetPreserving}
\forall R\in\Psi\ \exists Q\in\Phi\ (Q\C R)\ \Rightarrow\ {\textstyle\bigcap\limits_{Q\in\Phi}}Q^\succ\C{\textstyle\bigcap\limits_{R\in\Psi}}R^\succ.
\end{equation}
We immediately see that
\[\begin{matrix}\eqref{FiniteCMeetPreserving}&\Rightarrow&\eqref{CMeetPreserving}\\ &&\\ \Downarrow&&\Downarrow\\ &&\\ \eqref{FiniteMeetPreserving}&\Rightarrow&\eqref{MeetPreserving}\end{matrix}\]
Thus it suffices to prove
\[\eqref{MeetPreserving}\quad\Rightarrow\quad\eqref{Tight=>Filter}\quad\Rightarrow\quad\eqref{FiniteCMeetPreserving}.\]

For the first implication, say \eqref{MeetPreserving} holds and take tight $T\subseteq P$.  For any $p,q\in T=T^\prec$, we have $p',q'\in T$ with $p'\prec p$ and $q'\prec q$.  By \eqref{MeetPreserving}, $p'^\succ\cap q'^\succ\C p^\succ\cap q^\succ$.  As $T$ is tight, $p'^\succ\cap q'^\succ\not\C P\setminus T$ so $p^\succ\cap q^\succ\not\subseteq P\setminus T$, i.e. $T\cap p^\succ\cap q^\succ\neq\emptyset$ so $T$ contains a lower bound of $p$ and $q$.  As $p$ and $q$ were arbitrary, $T$ is downwards directed and hence a filter.

For the second implication, say \eqref{FiniteCMeetPreserving} fails, so we have non-empty finite $\Phi,\Psi\subseteq\mathscr{P}(P)$ such that, for all $R\in\Psi$, we have $Q\in\Phi$ with $Q\C R$ but
\begin{equation}\label{notC}
\bigcap_{Q\in\Phi}Q^\succ\not\C\bigcap_{R\in\Psi}R^\succ.
\end{equation}
By \eqref{CShrinking}, for each $R\in\Psi$ and $Q\in\Phi$ with $Q\C R$, we have a sequence $(F_n)$ of finite subsets such that, for all $n$,
\[Q\C F_{n+1}\prec F_n\prec R.\]
Note that
\begin{equation}\label{FnotC}
\Big(F_n^\succ\cap\bigcap_{S\in\Phi\setminus\{Q\}}S^\succ\Big)\not\C\bigcap_{R\in\Psi}R^\succ.
\end{equation}
Indeed, if this were not true then, as $Q\C F_n$ and hence $Q\D F_n$, \cite[Proposition 1.5 (1.5)]{BiceStarling2020HTight} would yield
\[\Big(\bigcap_{S\in\Phi}S^\succ\Big)\D\Big(F_n^\succ\cap\bigcap_{S\in\Phi\setminus\{Q\}}S^\succ\Big)\C \bigcap_{R\in\Psi}R^\succ.\]
But $U\D V\C W$ implies $U\C W$, by \cite[Proposition 1.5 (1.4)]{BiceStarling2020HTight}, so this would contradict \eqref{notC}.  As $F_n$ is finite, \eqref{FnotC} implies we have some $f_n\in F_n$ with
\[f_n^\succ\cap\bigcap_{S\in\Phi\setminus\{Q\}}S^\succ\not\C\bigcap_{R\in\Psi}R^\succ\]
(see \cite[Proposition 1.5 (1.7)]{BiceStarling2020HTight}).  By K\"onig's lemma for finitely branching trees (see \cite[Lemma III.5.6]{Kunen2011}) we can select $f_n\in F_n$ so that $(f_n)$ is decreasing, i.e. $f_{n+1}\prec f_n$, for all $n$.

Then we can again take $R'\in\Psi$ and $Q'\in\Phi$ with $Q'\C R'$ and $(F'_n)$ with
\[Q'\C F'_{n+1}\prec F'_n\prec R'.\]
As before, for all $m$ and $n$, we have
\[F_n'^\succ\cap f_m^\succ\cap\bigcap_{S\in\Phi\setminus\{Q,Q'\}}S^\succ\not\C\bigcap_{R\in\Psi}R^\succ,\]
As $(f_n)$ is decreasing, we can again select $f_n'\in F_n'$ such that, for all $m$,
\[f_n'^\succ\cap f_m^\succ\cap\bigcap_{S\in\Phi\setminus\{Q,Q'\}}S^\succ\not\C\bigcap_{R\in\Psi}R^\succ.\]
Yet again, K\"{o}nig's lemma means we can take $(f_n')$ decreasing.  Continuing in this way, we obtain a decreasing sequence for each element of $\Psi$.  Let $U$ be the union of these sequences, so $U\subseteq U^\prec$, $U^\prec\cap R\neq\emptyset$, for all $R\in\Psi$, and, for all finite $F\subseteq U$,
\[\widehat{F}\not\C\bigcap_{R\in\Psi}R^\succ\]
This means we can apply \cite[Theorem 2.11]{BiceStarling2020HTight} (with $Q$, $R$ and $S$ there replaced by $\emptyset$, $U$ and $\bigcap_{R\in\Psi}R^\succ$) to obtain tight $T$ containing $U$ but disjoint from $\bigcap_{R\in\Psi}R^\succ$.  But this means $T$ contains a finite subset $F$ which intersects every subset in $\Psi$, so any lower bound of $F$ would be in $\bigcap_{R\in\Psi}R^\succ$ and hence not in $T$.  Thus $T$ is not downwards directed and so not a filter.
\end{proof}

The \eqref{MeetPreserving}$\Rightarrow$\eqref{Tight=>Filter} part of Theorem \ref{MeetPreservingEquivalents} above generalises the fact that any maximal centred subset in a bi-pseudobasis must be an ultrafilter, as noted for posets in \cite[Proposition 1.4]{LawsonLenz2013}.  Even if $P$ is not a bi-pseudobasis, we can sometimes still show that the maximal centred filters are at least dense in the tight spectrum.

\begin{prp}\label{CountablePseudobasis}
If $P$ is a countable pseudobasis then the maximal centred filters are dense in $\mathcal{T}(P)=\{T\subseteq P:T\text{ is tight}\}$ with the topology generated by
\begin{equation}\label{TightBasis}
N^p_R=\{T\in\mathcal{T}(P):p\in T\text{ and }R\C P\setminus T\}.
\end{equation}
\end{prp}

\begin{proof}
If a filter is maximal centred then it is tight, by \cite[{}Proposition 2.9]{BiceStarling2020HTight}.  If $T\in N^p_R$ then we have $q\prec p$ with $q\perp R$ (otherwise $p\D R\C P\setminus T$ and hence $p\C P\setminus T$, contradicting the tightness of $T$).  Enumerate $P$ as a sequence $(p_n)$.  Set $q_1=q$ and recursively define a sequence $(q_n)$ as follows.  Having defined $q_n$, find the smallest $k$ such that $q_n\not\prec p_k$ and $q_n\not\perp p_k$.  Then just take any $q_{n+1}$ with $q_n,p_k\succ q_{n+1}$, noting that, as $q_{n+1}\prec p_k$, the next $k$ will be strictly larger (in the rare event that there was no such $k$ to begin with, the roundness of $P$ would imply $q_n\prec q_n$, in which case we could set $q_m=q_n$, for all $m>n$).  By construction, for every $p\in P$, we can find $q_n$ with $q_n\prec p$ or $q_n\perp p$.  Thus the upwards closure $U$ of $(q_n)$ will be maximal centred.  As $(q_n)$ is $\prec$-decreasing, $U$ will also be a filter.  As $q\in U$, $U\in N^q_\emptyset\subseteq N^p_R$.  As $N^p_R$ was an arbitrary non-empty basic open set, this shows that the maximal centred filters are indeed dense in $\mathcal{T}(P)$.
\end{proof}

Note that filters are determined by their initial segments, specifically
\[T\text{ is a filter}\qquad\Leftrightarrow\qquad\forall t\in T\ (T=(T\cap t^\succ)^\prec).\]
In bi-pseudobases, tightness is also determined by initial segments.

\begin{prp}
If $(P,\prec)$ is an abstract bi-pseudobasis and $\emptyset\neq T\subseteq p^\succ$ then
\begin{equation}\label{LocallyTight=>Tight}
T\text{ is tight in }p^\succ\qquad\Rightarrow\qquad T^\prec\text{ is tight in }P.
\end{equation}
\end{prp}

\begin{proof}
Say $T$ is tight in $p^\succ$.  By \eqref{Tight=>Filter}, $T$ is a filter in $p^\succ$ so $T^\prec$ is a filter in $P$.  Thus to prove that $T^\prec$ is tight in $P$ it suffices to show that
\begin{equation}\label{psucctinT}
p\succ t\in T\qquad\Rightarrow\qquad p\not\C P\setminus T^\prec.
\end{equation}
If $t\prec p\C P\setminus T^\prec$ then \eqref{CMeetPreserving} (with $Q=\{t\}$, $R=\{p\}$ and $S=P\setminus T^\prec$) yields
\[t\C p^\succ\cap(P\setminus T^\prec)^\succ\subseteq p^\succ\setminus T.\]
As $T$ is tight in $p^\succ$, this implies $t\notin T$, thus proving \eqref{psucctinT}.
\end{proof}

In other words, `locally' tight filters are automatically tight in bi-pseudobases.  This suggests that we might be able to generalise the theory of tight filters in bi-pseudobases by `localising', i.e. considering locally tight filters in local bi-pseudobases.  Indeed this is the case, as we show in the next section.  Moreover, this local generalisation is crucial if we want our theory to apply to all inverse semigroups with their canonical order \textendash\, see Example \ref{semilatticexpl} below.

But before moving on, let us just point out that in \eqref{LocalBiPseudobasis} we can use the given order $\prec$ rather than its lower preorder $\preceq$ if we already know that $P$ is an abstract pseudobasis.

\begin{prp}
An abstract local bi-pseudobasis is an abstract pseudobasis s.t.
\begin{equation}\label{LocallyMeetPreserving}
r'\prec r\prec p\quad\text{and}\quad s'\prec s\prec p\qquad\Rightarrow\qquad r'^\succ\cap s'^\succ\C r^\succ\cap s^\succ.
\end{equation}
\end{prp}

\begin{proof}
Taking $r=s=q$ and $r'=s'$ in \eqref{LocalBiPseudobasis}, we see that any abstract local bi-pseudobasis is an abstract pseudobasis.  Also \eqref{LocallyMeetPreserving} follows immediately from \eqref{LocalBiPseudobasis} and the fact that $\preceq$ weakens $\prec$ (because $\prec$ is transitive so $p\prec q$ implies $p^\succ\subseteq q^{\succ\succ}\subseteq q^\succ$).

Conversely, say $P$ is an abstract pseudobasis satisfying \eqref{LocallyMeetPreserving} and take $r,r',s,s',q$ with $r'\prec r\preceq q$ and $s'\prec s\preceq q$.  By \eqref{Shrinking}, we have finite $F$ with $r'\C F\prec r\preceq q$, and hence $F\prec q$.  Likewise, \eqref{Shrinking} yields finite $G$ with $s'\C G\prec s\preceq q$, and hence $G\prec q$.  By the \eqref{MeetPreserving} $\Rightarrow$ \eqref{CMeetPreserving} part of Theorem \ref{MeetPreservingEquivalents} applied within $q^\succ$,
\[r'^\succ\cap s'^\succ\C F^\succ\cap G^\succ\subseteq r^\succ\cap s^\succ.\]
Thus $r'^\succ\cap s'^\succ\C r^\succ\cap s^\succ$, showing that $P$ is an abstract local bi-pseudobasis.
\end{proof}

\begin{xpl}\label{semilatticexpl}
Say $(P,\leq)$ is a `generalised local $\wedge$-semilattice', i.e. a poset where every bounded pair $p,q\leq r$ is either disjoint $p\perp q$ or has an infimum $p\wedge q$, i.e.
\[p^\geq\cap q^\geq=\emptyset\qquad\text{or}\qquad p^\geq\cap q^\geq=(p\wedge q)^\geq.\]
As $p'\leq p$ and $q'\leq q$ implies $p'\wedge q'\leq p\wedge q$, it follows that $(P,\leq)$ is an abstract local bi-pseudobasis.

In particular, we can consider an inverse semigroup $S$ in its canonical ordering
\[p\leq q\qquad\Leftrightarrow\qquad p=pp^{-1}q.\]
As shown in \cite[\S1.4 Lemma 12 (3)]{Lawson1998}, bounded (or even compatible) pairs $p,q\in S$ always have meets, specifically $p\wedge q=pp^{-1}q$.  So if $S$ is an inverse semigroup with $0$ then $P=S\setminus\{0\}$ is an abstract local bi-pseudobasis with respect to the canonical order on $S$.  In this case our locally tight groupoid of $P$ will be the same as Exel's tight groupoid of $S$ (see the comments after Theorem \ref{GroupoidRecovery}).
\end{xpl}

\section{Locally Tight Filters}\label{LTF}

To turn an order structure $P$ into a topological space, on which the elements of $P$ get represented as open sets, one generally considers ultrafilters.  The motivation here is that, when $P$ is a basis of some locally compact space, the family of elements of $P$ containing a point $x$ do indeed form an ultrafilter w.r.t. compact containment.  As we are working with more general local bi-pseudobases, we need to consider more general `locally tight filters'.

\begin{center}
\textbf{Throughout this section, $(P,\prec)$ is an abstract local bi-pseudobasis}.
\end{center}

\begin{dfn}
We call $T\subseteq P$ \emph{locally tight} if $T\cap t^\succ$ is tight in $t^\succ$, for all $t\in T$.
\end{dfn}

By Theorem \ref{MeetPreservingEquivalents}, every locally tight $T\subseteq Q$ is a local filter, i.e. $T\cap t^\succ$ is a filter in $t^\succ$, for all $t\in T$.  Thus it suffices to consider singleton $F$ in Definition \ref{TightDefinition}.  So if $T=T^\prec$ then $T$ is locally tight iff, for all $s,t\in Q$,
\[t\succ s\in T\qquad\Rightarrow\qquad s\not\C t^\succ\setminus T.\]
More explicitly, $T=T^\prec$ is locally tight iff, for all $s,t\in Q$ and finite $F\subseteq Q$,
\begin{equation}\label{TightChars}
s\in t^\succ\cap T\quad\text{and}\quad F\prec t^\succ\setminus T\qquad\Rightarrow\qquad s^\succ\cap F^\perp\neq\emptyset.
\end{equation}

In contrast to our earlier work in \cite{BiceStarling2020HTight}, we will usually want $T$ itself to be a filter as well.  In this case, as in \eqref{LocallyTight=>Tight}, to verify local tightness it suffices to consider a single initial segment of $T$.

\begin{prp}\label{LocalTightness}
If $T$ is a filter in $P$ then, for any $t\in T$,
\[T\cap t^\succ\text{ is (locally) tight in }t^\succ\qquad\Leftrightarrow\qquad T\text{ is locally tight in }P.\]
\end{prp}

\begin{proof}
If $T$ is locally tight in $P$ then, by definition, $T\cap t^\succ$ tight (and hence locally tight) in $t^\succ$.  Conversely, say $T\cap t^\succ$ is locally tight in $t^\succ$.  For any $s\in T$, we have $r\in T$ with $r\prec s,t$, as $T$ is a filter.  As $r\prec t$, $T\cap r^\succ$ is tight in $r^\succ$.  As $r\prec s$ and $s^\succ$ is an abstract bi-pseudobasis, it follows from \eqref{LocallyTight=>Tight} (taking $r$ for $p$ and $s^\succ$ for $P$) that $T\cap s^\succ=(T\cap r^\succ)^\prec\cap s^\succ$ is tight in $s^\succ$.
\end{proof}

\begin{dfn}
The \emph{locally tight spectrum} $\mathcal{L}(P)$ is the set of all non-empty locally tight filters in $P$ with the topology generated by the sets $(O^p_R)_{R\,\C\,p}$ defined by
\[O^p_R=\{T\in\mathcal{L}(P):p\in T\text{ and }R\C p^\succ\setminus T\}.\]
\end{dfn}

\begin{rmk}
One should compare the locally tight spectrum $\mathcal{L}(P)$ with the tight spectrum $\mathcal{T}(P)$ from \cite[Definition 2.12]{BiceStarling2018} (see \eqref{TightBasis} above).  Intuitively, $\mathcal{T}(P)$ has more open sets than $\mathcal{L}(P)$ as there no $R\C p$ restriction.  This bounding restriction is important, just as it is in the order theoretic constructions of Paterson's universal groupoid and Exel's tight groupoid in \cite{LawsonLenz2013}.  Essentially, this is the reason why $\mathcal{T}(P)$ is always Hausdorff but $\mathcal{L}(P)$ is only locally Hausdorff.  However, these remarks are not really precise as $\mathcal{T}(P)$ and $\mathcal{L}(P)$ may not even have the same points \textendash\, $\mathcal{T}(P)$ can contain tight subsets that are not filters, while $\mathcal{L}(P)$ can contain filters that are not tight, only locally tight (also $R\C p^\succ\setminus T$ can be strictly stronger than $R\C P\setminus T$).  Although if $P$ is countable then $\mathcal{T}(P)$ and $\mathcal{L}(P)$ do at least share a dense subspace of maximal centred filters, by Proposition \ref{CountablePseudobasis} and Proposition \ref{UltrafilterDense} below (modifying the proof with the same enumeration argument).  And when $P$ is a bi-pseudobasis, the difference between $\mathcal{L}(P)$ and $\mathcal{T}(P)$ disappears \textendash\, see Theorem \ref{HausdorffEquivalents} below.
\end{rmk}

As $R\C p^\succ\setminus T$ means $R\D F\prec p^\succ\setminus T$, for some finite $F\subseteq P$, it would suffice to consider the smaller subbasis $(O^p_r)_{r\prec p}$ generating the topology of $\mathcal{L}(P)$.  On the other hand, to get a basis, we need to consider more general sets.  Let
\[O^G_F=\{T\in\mathcal{L}(P):G\subseteq T\text{ and }F\C G^\succ\setminus T\}.\]
Note that if $p\prec q$ then $p\in T\in\mathcal{L}(P)$ implies $\{p,q\}\subseteq T$, while $\{p,q\}^\succ=q^\succ$ so
\[O^{p,q}_F=\{T\in\mathcal{L}(P):p\in T\text{ and }F\C q^\succ\setminus T\}.\]

\begin{thm}\label{PairNeighbourhoodBase}
If $t\in T\in\mathcal{L}(P)$ then $(O^{p,t}_F)^{p,F\prec t}_{F\text{ is finite}}\ $ is a neighbourhood base at $T$.
\end{thm}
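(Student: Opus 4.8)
The plan is to show that every basic open set containing $T$ can be shrunk to one of the special form $O^{p,t}_F$ with $p,F\prec t$ and $F$ finite, and conversely that each such set is itself open and contains $T$. The second part is essentially bookkeeping: since $p,t\in T$ and $F\C t^\succ\setminus T$ (using $\{p,t\}^\succ=t^\succ$ when $p\prec t$, which holds because we may always shrink to such a $p$ — see below), the identity $O^{p,t}_F=\{S\in\mathcal{L}(P):p\in S,\ F\C t^\succ\setminus S\}$ noted before the theorem shows $O^{p,t}_F=O^p_F\cap\{S:t\in S\}$, and the latter set is open since $p\prec t$ forces $t\in S$ whenever $p\in S$; thus $O^{p,t}_F$ is genuinely a neighbourhood of $T$.

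For the first and substantive part, I would start from an arbitrary basic open $O^G_E\ni T$, so $G\subseteq T$ and $E\C G^\succ\setminus T$. Using that $T$ is a filter and $G$ is finite, pick $g\in T$ with $g\prec g'$ for every $g'\in G$; then $g^\succ\subseteq G^\succ$, and since $t\in T$ as well, by directedness pick $p\in T$ with $p\prec g$ and $p\prec t$. Now $\{p,t\}^\succ=t^\succ\supseteq g^\succ\supseteq G^\succ$ is false in general — rather $t^\succ$ need not be contained in $G^\succ$ — so the correct move is to observe $p^\succ\subseteq g^\succ\subseteq G^\succ$ and work with $p$ in place of $g$. Since $E\C G^\succ\setminus T$ and $G^\succ\setminus T\supseteq$ ... here one must be careful: shrinking $G$ to $p$ enlarges the complement, so $E\C G^\succ\setminus T$ does not immediately give $E\C p^\succ\setminus T$. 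Instead I would keep $G$ for the covering condition and use \eqref{CShrinking} to replace $E$ by a finite $F$ with $E\C F\prec G^\succ\setminus T$, then intersect: since $p\prec t$ and $p\prec g'$ for all $g'\in G$, we get $F\prec G^\succ\setminus T$; to turn this into $F\prec t^\succ\setminus T$ note every element of $F$ lies below some element of $G^\succ\setminus T$, and by further shrinking $p$ we may assume $p$ lies below each such element's "witness" — this is where the local bi-pseudobasis hypothesis enters, via Proposition \ref{LocalTightness} and \eqref{CMeetPreserving}, to intersect the finitely many covers $p\C(\text{shrinkings})$ inside $t^\succ$.

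The main obstacle, and the step deserving the most care, is precisely reconciling the two defining conditions of $O^G_E$ after shrinking $G$ down to a single $p\prec t$: shrinking the "positive" generating set worsens the "negative" covering condition, so one cannot shrink both coordinates independently. The resolution is to first apply \eqref{CShrinking} to pull $E$ down to a finite $F$ that is already $\prec$-below $G^\succ\setminus T$, then use that $T\cap t^\succ$ is tight in $t^\succ$ (Proposition \ref{LocalTightness}) together with \eqref{CMeetPreserving} within $t^\succ$ to absorb the finitely many "$p$ refines $f$'s witness" conditions into a single cover $F\C t^\succ\setminus T$, all while keeping $p\in T$ and $F\prec t$ (the latter automatic once $F\prec G^\succ$ and $G^\succ\cap t^\succ$ is handled). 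I would then conclude $T\in O^{p,t}_F\subseteq O^G_E$, which exhibits $(O^{p,t}_F)^{p,F\prec t}_{F\text{ finite}}$ as a neighbourhood base at $T$.
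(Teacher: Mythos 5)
There is a genuine gap, and it sits exactly where you flag ``the step deserving the most care'' but then leave unresolved: you never prove the containment $O^{p,t}_F\subseteq(\text{given neighbourhood})$. Knowing $T\in O^{p,t}_F$ is the easy half; the substance of the theorem is that \emph{every} $S\in O^{p,t}_F$ still satisfies the original negative conditions $R_g\C g^\succ\setminus S$, and your construction of $F$ gives no handle on this. The paper's resolution is to pick $p\prec q\prec t,G$ in $T$ and set $F=\bigcup_g F_g$, where each $F_g$ is produced by \eqref{CShrinking} and \eqref{CMeetPreserving} (applied in $g^\succ$) so that $p^\succ\cap Q_g^\succ\C F_g\prec q^\succ\setminus T\subseteq t^\succ\setminus T$, with $R_g\C Q_g\prec g^\succ\setminus T$. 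The whole point of having $F_g$ cover $p^\succ\cap Q_g^\succ$ is that for any $S\in O^{p,t}_F$, an element $s\in S\cap p^\succ\cap Q_g^\succ$ would give $s\C F_g\C t^\succ\setminus S$, contradicting tightness of $S\cap t^\succ$; hence $S\cap Q_g=\emptyset$, so $R_g\C Q_g\subseteq g^\succ\setminus S$ and $S\in O^g_{R_g}$. Your $F$, obtained merely by shrinking $E$ to something $\prec G^\succ\setminus T$, carries no information about $p^\succ$, so this argument is unavailable and no substitute is offered before you ``conclude'' $O^{p,t}_F\subseteq O^G_E$.

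Two further problems. First, your route to $F\prec t^\succ\setminus T$ does not work: the elements of your $F$ lie below elements of $G^\succ\setminus T$, which need not meet $t^\succ$ at all, and ``further shrinking $p$'' cannot fix this, since $p$ and $F$ occupy independent coordinates of $O^{p,t}_F$ --- moving $p$ down does not move $F$ into $t^\succ$. (In the paper, $F_g$ lands in $t^\succ\setminus T$ automatically because it is a shrinking of a cover by elements of $q^\succ\cap(g^\succ\setminus T)^\succ$.) Second, the object to be refined is a finite intersection $\bigcap_{g\in G}O^g_{R_g}$ of subbasic sets, not $O^G_E$ with a single joint cover $E\C G^\succ\setminus T$; these are different families, and relating them is essentially as hard as the theorem itself. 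Finally, a small repair to your openness remark: $O^{p,t}_F=O^p_\emptyset\cap O^t_F$, not $O^p_F\cap\{S:t\in S\}$ --- the cover in the definition is of $t^\succ\setminus S$, not $p^\succ\setminus S$, and $O^p_F$ need not even be a generating set since $F\C p$ may fail.
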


\begin{proof}
Say we are given a basic neighbourhood of $T$, i.e. $T\in\bigcap_{g\in G}O^g_{R_g}$ where $G$ is finite and $R_g\C g$, for each $g\in G$.  As $T$ is a filter, we have $p,q\in T$ with $t,G\succ q\succ p$.  For each $g\in G$, $T\in O^g_{R_g}$ so $R_g\C g^\succ\setminus T$ and \eqref{CShrinking} yields $Q_g$ with $R_g\C Q_g\prec g^\succ\setminus T$.  As we also have $p\prec q\prec g$, \eqref{CMeetPreserving} in $g^\succ$ yields $(p^\succ\cap Q_g^\succ)\C(q^\succ\cap(g^\succ\setminus T)^\succ)$.  By \eqref{CShrinking} again and the fact that $T^\prec\subseteq T$, we have finite $F_g$ with
\[p^\succ\cap Q_g^\succ\ \C\ F_g\ \prec\ q^\succ\cap(g^\succ\setminus T)^\succ\ \subseteq\ q^\succ\setminus T\ \subseteq\ t^\succ\setminus T.\]
In particular, $F_g\C t^\succ\setminus T$ so $T\in O^{p,t}_{F_g}$.

Setting $F=\bigcup_{g\in G}F_g$, we thus have $T\in O^{p,t}_F$ and we just have to show that $O^{p,t}_F\subseteq\bigcap_{g\in G}O^g_{R_g}$.  To see this, note that if $S\in O^{p,t}_F$ then $p\in S$ and hence $Q_g\cap S=\emptyset$ \textendash\, otherwise, as $S$ is a filter with $S\in O^{p,t}_F\subseteq O^t_{F_g}$, we would have $s$ with
\[s\in S\cap p^\succ\cap Q_g^\succ\C F_g\C t^\succ\setminus S,\]
contradicting the tightness of $S\cap t^\succ$ in $t^\succ$.  Thus $R_g\C Q_g\subseteq g^\succ\setminus S$ and hence $S\in O^g_{R_g}$, as $g\succ p\in S$.  As $S$ and $g$ were arbitrary, $O^{p,t}_F\subseteq\bigcap_{g\in G}O^g_{R_g}$.
\end{proof}

\begin{cor}\label{PairBasis}
$(O^{p,q}_F)^{p,F\prec q}_{F\text{ is finite}}\ $ is a basis for $\mathcal{L}(P)$.
\end{cor}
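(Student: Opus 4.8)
The plan is to read this off from Theorem~\ref{PairNeighbourhoodBase}, using the standard fact that a family $\mathcal{B}$ of \emph{open} subsets of a topological space is a basis precisely when, for every point $T$, the members of $\mathcal{B}$ containing $T$ form a neighbourhood base at $T$. So two things must be checked: that each set $O^{p,q}_F$ with $p,F\prec q$ (and $F$ finite) is open, and that for each $T\in\mathcal{L}(P)$ such sets furnish a neighbourhood base at $T$.

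For openness, fix $p\prec q$ and a finite $F\prec q$. Since $F$ is finite and $F\prec q$, taking $F$ itself as the finite witness in the definition of $\C$ gives $F\C q$, so $O^q_F$ is one of the subbasic open sets generating the topology of $\mathcal{L}(P)$; similarly $\emptyset\C p$ holds trivially, so $O^p_\emptyset=\{T\in\mathcal{L}(P):p\in T\}$ is subbasic. Because every locally tight filter is an up-set, $p\in T$ forces $q\in T$, and comparing definitions then gives $O^{p,q}_F=O^p_\emptyset\cap O^q_F$, a finite intersection of subbasic open sets, hence open.

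For the neighbourhood-base property, fix $T\in\mathcal{L}(P)$; by definition $T$ is a non-empty locally tight filter, so choose any $t\in T$. Theorem~\ref{PairNeighbourhoodBase} says $(O^{p,t}_F)^{p,F\prec t}_{F\text{ is finite}}$ is a neighbourhood base at $T$, and every set here has the form $O^{p,q}_F$ with $q:=t$ and $p,F\prec q$, so it belongs to the proposed family. Hence the members of the proposed family containing $T$ contain, and therefore constitute, a neighbourhood base at $T$. Together with the openness established above, this shows the family is a basis for $\mathcal{L}(P)$.

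There is essentially no obstacle here: all the substance is in Theorem~\ref{PairNeighbourhoodBase}. The only point demanding a little care is to include the openness check — a family that yields a neighbourhood base at every point through possibly non-open members need not be a basis — so the identity $O^{p,q}_F=O^p_\emptyset\cap O^q_F$ (or an equivalent argument) really does need to be recorded.
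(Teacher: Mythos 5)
Your proof is correct and follows exactly the paper's own argument: the paper likewise deduces the corollary from Theorem \ref{PairNeighbourhoodBase} together with the observation that $O^{p,q}_F=O^p_\emptyset\cap O^q_F$ is open whenever $p,F\prec q$ and $F$ is finite. You have merely spelled out the openness check (via $F\C q$ and $\emptyset\C p$) in more detail than the paper does.
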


\begin{proof}
This is immediate from Theorem \ref{PairNeighbourhoodBase} and the fact that $O^{p,q}_F=O^p_\emptyset\cap O^q_F$ is open in $\mathcal{L}(P)$ whenever $p,F\prec q$ and $F$ is finite.
\end{proof}

Recall that an \emph{ultrafilter} is a maximal filter.  Every ultrafilter is locally tight and, moreover, every locally tight filter is a limit of ultrafilters.

\begin{prp}\label{UltrafilterDense}
Ultrafilters are dense in $\mathcal{L}(P)$.
\end{prp}

\begin{proof}
Say we have $p,q\in P$ and finite $F\subset P$ such that $p,F\prec q$ and $O^{p,q}_F\neq\emptyset$.  Taking $T\in O^{p,q}_F$, we have $F\C q^\succ\setminus T$ so \eqref{CShrinking} yields finite $G$ with $F\C G\prec q^\succ\setminus T$.  Note that we must have $p\not\D G$ \textendash\, otherwise we would have $p\D G\prec q^\succ\setminus T$ and hence $T\ni p\C q^\succ\setminus T$, contradicting the tightness of $T\cap q^\succ$ in $q^\succ$.  Thus we have some $r\in p^\succ\cap G^\perp$.

Let $U$ be any ultrafilter containing $r$.  As $U$ is a filter, $U\cap q^\succ$ must also be a maximal filter in $q^\succ$.  In fact, $U\cap q^\succ$ must be maximal even among round centred subsets of $q^\succ$, otherwise $U\cap q^\succ$ could be extended to a maximal round centred subset, which would be tight in $q^\succ$, by \cite[Proposition 2.9]{BiceStarling2020HTight}, and hence a filter in $q^\succ$, by Theorem \ref{MeetPreservingEquivalents}.  Thus $U\cap q^\succ$ is indeed tight in $q^\succ$, again by \cite[Proposition 2.9]{BiceStarling2020HTight}, and hence locally tight in $P$, by Proposition \ref{LocalTightness}.  Moreover $q\succ p\succ r\in U$ and $r\perp G$ so $F\C G\subseteq q^\succ\setminus U$.  Thus $U\in O^{p,q}_F$, proving density.
\end{proof}

Ultrafilters also have a nicer basis, namely $(O^p)_{p\in P}$ (where $O^p=O^p_\emptyset$).

\begin{prp}\label{UltrafilterNBase}
Every ultrafilter $U$ has $(O^u)_{u\in U}$ as a neighbourhood base.
\end{prp}

\begin{proof}
Say $(O^u)_{u\in U}$ is not a neighbourhood base of $U\in\mathcal{L}(P)$, so we have $p,F\prec t$ with $U\in O^{p,t}_F\nsupseteq O^u$, for all $u\in U$.  By \eqref{CShrinking}, we have $G$ with $F\C G\prec t^\succ\setminus U$.  For all $u\in U$ with $u\prec p,t$, we must have $u\not\perp G$, otherwise every $S\in O^u$ would be disjoint from $G$ and hence $F\C G\subseteq t^\succ\setminus S$ which would imply $O^u\subseteq O^{p,t}_F$, a contradiction.  But this means that $U\cap t^\succ$ is not maximal in $t^\succ$, by \cite[Proposition 2.9 (2.2)]{BiceStarling2020HTight}.  Thus we have a filter $V$ in $t^\succ$ properly extending $U\cap t^\succ$, which means $V^\prec$ is a filter properly extending $U$, i.e. $U$ is not an ultrafilter.
\end{proof}

At this point, one might wonder why we do not restrict our attention to ultrafilters.  One key reason is that ultrafilters on their own may not be locally compact.

\begin{xpl}
Consider the full countable infinitely branching tree or, more concretely, let $P=\mathbb{N}^{<\omega}$ be the poset of all finite sequences of natural numbers ordered by extension, i.e. $p\prec q$ means $p\supseteq q$.  Then the ultrafilters in the topology above can be identified with the infinite sequences $\mathbb{N}^\omega$ in their usual product topology, i.e. the Baire space.  This is homeomorphic to the irrationals in their usual subspace topology which is certainly not locally compact.

In contrast, every filter in $P$ is (locally) tight and these can be identified with arbitrary sequences, both finite and infinite, in a topology which makes them homeomorphic to the Cantor space.
\end{xpl}

Note that if we considered the full binary tree $\{0,1\}^{<\omega}$ instead then (subsequences of) finite sequences would no longer be (locally) tight, i.e. in this case every (locally) tight filter would be an ultrafilter.  More generally, locally tight filters coincide with ultrafilters precisely when $P$ satisfies a generalisation of the `trapping condition' from \cite{Lawson2012}.

\begin{thm}\label{TrappingEquivalents}
The following conditions on $P$ are equivalent.
\begin{align}
\label{OpBasis}(O^p)_{p\in P}\text{ is a basis for }&\mathcal{L}(P).\\
\label{LocallyTight=>Ultrafilter}T\text{ is a locally tight filter}\qquad\Rightarrow\qquad&T\text{ is an ultrafilter}.\\
\label{TrappingCondition}q'\prec q\prec p\quad\text{and}\quad r'\prec r\prec p\qquad\Rightarrow\qquad&q'^\succ\cap r^\perp\C q^\succ\cap r'^\perp.\\
\label{CTrappingCondition}Q'\C Q\prec p\quad\text{and}\quad R'\C R\prec p\qquad\Rightarrow\qquad&Q'^\succ\cap R^\perp\C Q^\succ\cap R'^\perp.
\end{align}
\end{thm}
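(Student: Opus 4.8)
The plan is to prove the cycle $\eqref{LocallyTight=>Ultrafilter}\Rightarrow\eqref{OpBasis}\Rightarrow\eqref{CTrappingCondition}\Rightarrow\eqref{TrappingCondition}\Rightarrow\eqref{LocallyTight=>Ultrafilter}$. Two of these links come for free. For $\eqref{LocallyTight=>Ultrafilter}\Rightarrow\eqref{OpBasis}$: if every point of $\mathcal{L}(P)$ is an ultrafilter, then by Proposition \ref{UltrafilterNBase} each point $T$ already has $(O^t)_{t\in T}$ as a neighbourhood base, so $(O^p)_{p\in P}$ is a basis. For $\eqref{CTrappingCondition}\Rightarrow\eqref{TrappingCondition}$: specialise to singletons $Q'=\{q'\}$, $Q=\{q\}$, $R'=\{r'\}$, $R=\{r\}$, using that $\{q'\}\C\{q\}$ whenever $q'\prec q$ and that $\{q\}\prec p$ just says $q\prec p$ (and likewise for $R',R$). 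So the substance lies in $\eqref{OpBasis}\Rightarrow\eqref{CTrappingCondition}$ and $\eqref{TrappingCondition}\Rightarrow\eqref{LocallyTight=>Ultrafilter}$, which I would prove in contrapositive form; both turn on the same object --- a \emph{non-maximal} locally tight filter --- together with the maximality criterion \cite[Proposition 2.9 (2.2)]{BiceStarling2020HTight}.

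First I would show that a non-maximal locally tight filter $T$ keeps $(O^p)_{p\in P}$ from being a basis. Since $T$ is a filter with $T=(T\cap t^\succ)^\prec$ for each $t\in T$, non-maximality of $T$ forces $T\cap t^\succ$ to be a non-maximal round centred subset of $t^\succ$ for every $t\in T$; the maximality criterion then supplies a finite $G\subseteq t^\succ$ with $G\C t^\succ\setminus T$ such that no $c\in T\cap t^\succ$ is perpendicular to $G$. Fixing such a $t$ and picking $p\in T$ with $p\prec t$, the basic open set $O^{p,t}_G$ contains $T$ but no $O^r$ with $r\in T$: after shrinking $r$ below $p$ and $t$, pick (using $r\not\perp G$) some $g_0\in G$ and $s\prec r,g_0$, and let $S$ be the $\prec$-closure of a maximal round centred subset of $t^\succ$ through $s$ --- by \cite[Proposition 2.9]{BiceStarling2020HTight} this subset is tight in $t^\succ$, hence a filter there by Theorem \ref{MeetPreservingEquivalents}, so $S$ is a locally tight filter by Proposition \ref{LocalTightness}, with $r\in S$ and $g_0\in S\cap t^\succ$; tightness of $S\cap t^\succ$ then gives $g_0^\succ=\widehat{\{g_0\}}\not\C t^\succ\setminus S$, hence $G\not\C t^\succ\setminus S$, so $S\in O^r\setminus O^{p,t}_G$. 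Thus $(O^r)_{r\in T}$ is not a neighbourhood base at $T$. (This also yields $\eqref{OpBasis}\Rightarrow\eqref{LocallyTight=>Ultrafilter}$ directly.)

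It then remains, for $\eqref{OpBasis}\Rightarrow\eqref{CTrappingCondition}$, to build a non-maximal locally tight filter out of a failure of \eqref{CTrappingCondition}, and this is the heart of the argument. Assume $Q'\C Q\prec p$, $R'\C R\prec p$ but $Q'^\succ\cap R^\perp\not\C Q^\succ\cap R'^\perp$. After replacing $R'$ by a single well-chosen element $r'$ (reducing via \eqref{CShrinking} and the finiteness selection in \cite[Proposition 1.5 (1.7)]{BiceStarling2020HTight}), I would run, inside the bi-pseudobasis $p^\succ$ and exactly as in the proof of Theorem \ref{MeetPreservingEquivalents}, a K\"onig's-lemma construction of $\prec$-decreasing sequences on the non-$\C$-covered side: this yields a round up-set $U$ which meets $Q$ and has $\widehat{F}\not\C Q^\succ\cap\{r'\}^\perp$ for every finite $F\subseteq U$. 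Applying \cite[Theorem 2.11]{BiceStarling2020HTight} then produces a tight --- hence locally tight, by Proposition \ref{LocalTightness} --- filter $T\supseteq U$ disjoint from $Q^\succ\cap\{r'\}^\perp$, and $T$ cannot be maximal, since then no $q\in T\cap Q^\succ$ is perpendicular to $r'$, so $T\cap q^\succ$ violates the maximality criterion with the singleton $\{r'\}$ as witness. The main obstacle, where I expect the real work to be, is organising the decreasing sequences so that the resulting filter is simultaneously locally tight (which requires the relevant non-$\C$ inequalities to survive at every finite stage, using the $\C$-meet calculus of Theorem \ref{MeetPreservingEquivalents} and \cite[Proposition 1.5]{BiceStarling2020HTight}) and genuinely non-maximal (which requires $r'$ to remain ``addable'').

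Finally, for $\eqref{TrappingCondition}\Rightarrow\eqref{LocallyTight=>Ultrafilter}$, suppose a locally tight filter $T$ is not an ultrafilter; fix $t_1\prec t_0$ in $T$, and let a finite $G\subseteq t_1^\succ$ with $G\C t_1^\succ\setminus T$ and no $c\in T\cap t_1^\succ$ perpendicular to $G$ witness the non-maximality of $T\cap t_1^\succ$ in $t_1^\succ$. For each $c\in T\cap t_1^\succ$, tightness of $T\cap t_1^\succ$ and \eqref{TightChars} produce elements $\prec c$ perpendicular to $G$; combining these with the covering decomposition of $c$ relative to elements $\prec$-below those of $G$ (from the pseudobasis calculus of \cite[Proposition 1.5]{BiceStarling2020HTight}) and using $p=t_0$ as a bound, one is led to contradict the relevant instance $Q'^\succ\cap R^\perp\C Q^\succ\cap R'^\perp$ of \eqref{TrappingCondition} (with $R$ built from $G$ and $Q,Q'$ from $T\cap t_1^\succ$). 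Matching the $\prec$-directions correctly when invoking \eqref{TrappingCondition} is the one genuinely delicate point here; an alternative is to first upgrade \eqref{TrappingCondition} to \eqref{CTrappingCondition} by a $\C$-strengthening argument in the spirit of \cite[Proposition 2.3]{BiceStarling2020HTight} and then argue with the more flexible relational form. Either way, the cycle closes.
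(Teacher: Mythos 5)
Your cycle is the paper's cycle: $\eqref{CTrappingCondition}\Rightarrow\eqref{TrappingCondition}$ by specialisation and $\eqref{LocallyTight=>Ultrafilter}\Rightarrow\eqref{OpBasis}$ via Proposition \ref{UltrafilterNBase} are exactly as in the paper, and your preliminary observation that a non-maximal locally tight filter obstructs \eqref{OpBasis} is essentially Proposition \ref{UltrafilterNBase} run in reverse and is sound. The genuine gap is in $\eqref{OpBasis}\Rightarrow\eqref{CTrappingCondition}$. First, the reduction of $R'$ to a single $r'$ goes the wrong way: $Q^\succ\cap R'^\perp\subseteq Q^\succ\cap r'^\perp$ for each $r'\in R'$, so covering the larger set is \emph{easier}, and $Q'^\succ\cap R^\perp\not\C Q^\succ\cap R'^\perp$ does not yield $Q'^\succ\cap R^\perp\not\C Q^\succ\cap r'^\perp$ for any single $r'$; the finiteness selection of \cite[Proposition 1.5 (1.7)]{BiceStarling2020HTight} decomposes the left side of $\not\C$ (a union over a finite set), not the right side (here an intersection over $R'$). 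Second, and more seriously, even granting a single $r'$, the filter $T$ you build is only guaranteed to avoid $Q^\succ\cap r'^\perp$, and nothing in your application of \cite[Theorem 2.11]{BiceStarling2020HTight} prevents $r'\in T$: an ultrafilter containing $r'$ together with all the $f_n$ avoids $Q^\succ\cap r'^\perp$ automatically, since a filter contains no pair of disjoint elements. For such a $T$ the singleton $\{r'\}$ is not a legitimate witness for the maximality criterion, which requires $r'\C q^\succ\setminus T$ --- precisely the ``$r'$ must remain addable'' issue you flagged but did not resolve. The fix is the paper's: apply \cite[Theorem 2.11]{BiceStarling2020HTight} with the avoided set $(Q^\succ\cap R'^\perp)\cup R$, keeping $R'$ and $R$ as sets, so that $T$ also misses $R$ and hence $R'\C R\subseteq p^\succ\setminus T$, i.e.\ $T\in O^p_{R'}$; one then shows directly that no $O^s\ni T$ fits inside $O^p_{R'}$ (any such $s$ may be shrunk into $Q^\succ$ and must be $\perp R'$, forcing $s\in Q^\succ\cap R'^\perp$, which $T$ avoids).

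The link $\eqref{TrappingCondition}\Rightarrow\eqref{LocallyTight=>Ultrafilter}$ is also not actually established: your contrapositive sketch never exhibits the elements $q'\prec q\prec p$ and $r'\prec r\prec p$ witnessing a failure of \eqref{TrappingCondition}, and ``one is led to contradict the relevant instance'' is where all the content lies. The paper argues directly instead: given $q'\prec q\prec p$ in $T$ and $F\C p^\succ\setminus T$, shrink twice to $F\C G'\prec G\prec p^\succ\setminus T$, apply \eqref{TrappingCondition} to each pair $g'\prec g$ to get $q'^\succ\cap G^\perp\C q^\succ\cap g'^\perp$, intersect over $g'\in G'$ using the bi-pseudobasis property of $p^\succ$, and invoke \cite[Proposition 2.7]{BiceStarling2020HTight} to produce $s\in T\cap p^\succ\cap F^\perp$; the maximality criterion then makes $T\cap p^\succ$, and hence $T$, an ultrafilter. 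You should adopt that argument rather than the contrapositive.
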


\begin{proof}\
\begin{itemize}
\item[\eqref{CTrappingCondition}$\Rightarrow$\eqref{TrappingCondition}]  Immediate.

\item[\eqref{TrappingCondition}$\Rightarrow$\eqref{LocallyTight=>Ultrafilter}]
Assume \eqref{TrappingCondition} holds and take $p,q,q'\in T\in\mathcal{L}(P)$ with $q'\prec q\prec p$.  For any $F\C p^\succ\setminus T$, \eqref{CShrinking} yields finite $G$ and $G'$ with $F\C G'\prec G\prec p^\succ\setminus T$.  For every $g'\in G'$, we have $g\in G$ with $g'\prec g$ and hence \eqref{TrappingCondition} then yields $q'^\succ\cap G^\perp\subseteq q'^\succ\cap g^\perp\C q^\succ\cap g'^\perp$.  As $p^\succ$ is an abstract bi-pseudobasis, we obtain
\[q'^\succ\cap G^\perp\C\bigcap_{g'\in G'}(q^\succ\cap g'^\perp)^\succ\subseteq q^\succ\cap G'^\perp.\]
As $q'\in T$ and $G\prec p^\succ\setminus T$, \cite[{}Proposition 2.7]{BiceStarling2020HTight} in $p^\succ$ then yields
\[s\in T\cap q^\succ\cap G'^\perp\subseteq T\cap p^\succ\cap F^\perp.\]
As $F$ was arbitrary, $T\cap p^\succ$ is an ultrafilter in $p^\succ$, by \cite[{}Proposition 2.9 (2.2)]{BiceStarling2020HTight}, and hence $T$ is also an ultrafilter in $P$ (if not, then we could extend $T$ to an ultrafilter $U$, take $u\in U\setminus T$ and find $v\in U\setminus T$ with $v\prec u,p$, contradicting maximality in $p^\succ$).

\item[\eqref{LocallyTight=>Ultrafilter}$\Rightarrow$\eqref{OpBasis}]  See Proposition \ref{UltrafilterNBase}.

\item[\eqref{OpBasis}$\Rightarrow$\eqref{CTrappingCondition}]  Say \eqref{CTrappingCondition} fails so we have $Q'\C Q\prec p$ and $R'\C R\prec p$ with
\[Q'^\succ\cap R^\perp\not\C Q^\succ\cap R'^\perp.\]
By \eqref{CShrinking}, we have a sequence $(F_n)$ of finite subsets such that
\[Q'\C F_{n+1}\prec F_n\prec Q,\]
for all $n\in\mathbb{N}$.  As in the proof of Theorem \ref{MeetPreservingEquivalents}, K\"{o}nig's lemma yields $f_n\in F_n$ such that $f_{n+1}\prec f_n$ and $f_n^\succ\cap R^\perp\not\C Q^\succ\cap R'^\perp$ and hence $f_n^\succ\not\C(Q^\succ\cap R'^\perp)\cup R$, for all $n\in\mathbb{N}$.

Then \cite[Theorem 2.11]{BiceStarling2020HTight} (with $Q$, $R$ and $S$ replaced by $\emptyset$, $(f_n)$ and $(Q^\succ\cap R'^\perp)\cup R$) yields a tight subset of $p^\succ$ containing $(f_n)$ but disjoint from $(Q^\succ\cap R'^\perp)$ and $R$.  Taking the upwards closure yields a locally tight filter $T$ again containing $(f_n)$ but disjoint from $(Q^\succ\cap R'^\perp)$ and $R$.  Thus $R'\C R\subseteq p^\succ\setminus T$ so $T\in O^p_{R'}$.

If we also had some $s\in P$ with $T\in O^s\subseteq O^p_{R'}$ then, as $T$ is a filter, by replacing $s$ with a lower bound of $s$ and some $f_n$ in $T$ if necessary we may assume that $s\in Q^\succ$.  Also, we must have $s\perp R'$, otherwise any maximal filter $U$ containing some element of $s^\succ\cap R'^\succ$ would be in $O^s\setminus O^p_{R'}$, contradicting $O^s\subseteq O^p_{R'}$.  Thus $s\in Q^\succ\cap R'^\perp$, even though $T$ is disjoint from $Q^\succ\cap R'^\perp$, a contradiction.  Thus there is no such $s$ and hence $(O^s)_{s\in P}$ does not contain a neighborhood base for the point $T\in\mathcal{L}(P)$.
\end{itemize}
\end{proof}

Let $\Subset$ denote the compact containment relation in any topological space, i.e.
\[O\Subset N\qquad\Leftrightarrow\qquad\exists\text{ compact }C\ (O\subseteq C\subseteq N).\]
So in Hausdorff spaces, $O\Subset N$ is just saying that $\overline{O}$ is compact and $\overline{O}\subseteq N$.

\begin{thm}\label{qCrSubset}
Each $O^p$ is a Hausdorff subspace of $\mathcal{L}(P)$.  If $q,r\prec p$,
\[q\C r\qquad\Leftrightarrow\qquad O^q\Subset O^r.\]
\end{thm}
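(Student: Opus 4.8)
The plan is to prove the two claims separately, using the basis from Corollary \ref{PairBasis} restricted to $O^p$ and the ultrafilter neighbourhood base from Proposition \ref{UltrafilterNBase} for the Hausdorff part, and then a compactness argument for the equivalence. First, to see $O^p$ is Hausdorff, I would take two distinct locally tight filters $S,T\in O^p$ and, picking some $s\in S\setminus T$ (WLOG using the filter property to assume $s\prec p$), use that $S$ is a filter and $s\notin T$ together with the tightness of $T\cap p^\succ$ in $p^\succ$ to separate them. The key point is that inside $O^p$ everything is `bounded by $p$', so the bi-pseudobasis property of $p^\succ$ (from Theorem \ref{MeetPreservingEquivalents}) forces tight subsets of $p^\succ$ to be filters and lets one run the standard point-free Hausdorff argument: from $s\in S$, $s\notin T$ and tightness of $T\cap p^\succ$ one extracts $t\in T\cap p^\succ$ with $t\perp s$ (via $s\not\C p^\succ\setminus T$ failing to cover, then \eqref{CShrinking} and disjointness), and then $O^{s,p}_\emptyset$ and a suitable $O^{t,p}_F$ (or just $O^t$-type neighbourhoods intersected with $O^p$) are disjoint open sets separating $S$ and $T$.

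Next, for the forward direction of the equivalence, assume $q\C r$ with $q,r\prec p$. I would show $O^q\subseteq \overline{O^r}\cap(\text{something compact})$. Concretely: $O^q\subseteq O^r$ is easy since $q\C r$ gives $q\D F\prec r$ for finite $F$, so any filter containing $q$ contains $r$ (as $q\D F\prec r$ and roundness/directedness push a lower bound of $q$ below $r$), hence $O^q\subseteq O^r$; one then argues $O^q\Subset O^r$ by exhibiting a compact set in between, which is where the real work is. The natural candidate is the closure of $O^q$ in $O^r$ — or better, the set $\{T\in O^r : q\in T\text{ or }q\C \text{(enough of the complement)}\}$. I expect to use that $O^p$ is Hausdorff (just proved) so that compactness can be checked via closedness plus a cover argument, and to use Theorem \ref{PairNeighbourhoodBase} to reduce any open cover of the candidate compact set to the basic form.

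For the reverse direction, assume $O^q\Subset O^r$ but $q\not\C r$; I would derive a contradiction by building a locally tight filter witnessing the failure. Since $q\not\C r$, in particular $q\not\C r$ means (after \eqref{CShrinking}-type manipulations) there is a $\prec$-decreasing sequence $(f_n)$ with $f_n\prec q$ and $f_n^\succ\not\C r$ for all $n$, obtained via König's lemma exactly as in the proof of Theorem \ref{MeetPreservingEquivalents} and Theorem \ref{TrappingEquivalents}. Applying \cite[Theorem 2.11]{BiceStarling2020HTight} in $p^\succ$ (with the role of $S$ played by $r$) yields a tight subset of $p^\succ$ containing $(f_n)$ but disjoint from $r^\succ$, whose upward closure is a locally tight filter $T\in O^q$ with $T\notin O^r$ — but then $T\in O^q\setminus O^r$ contradicts $O^q\subseteq O^r$, which is already forced by $O^q\Subset O^r$ (as $\Subset$ entails $\subseteq$). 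The main obstacle is the $\Rightarrow$ direction: pinning down the correct compact intermediate set and verifying its compactness directly, since $\mathcal{L}(P)$ is only locally Hausdorff and one cannot simply invoke `closed subset of a compact Hausdorff space'; the cleanest route is probably to show the closure $\overline{O^q}$ taken inside the Hausdorff open set $O^p$ is compact by reducing an arbitrary open cover to finitely many basic sets $O^{s,p}_F$ and using the cover relation $q\C r$ to bound the combinatorics, mirroring how compactness of $O^q_\emptyset$-type sets is handled in \cite{BiceStarling2018,BiceStarling2020HTight}.
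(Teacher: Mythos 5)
Your proposal leaves the hardest step unproved. The direction $q\C r\Rightarrow O^q\Subset O^r$ requires actually exhibiting a compact set between $O^q$ and $O^r$, and everything you say about this is conditional (``I expect to use\dots'', ``the cleanest route is probably\dots''): you name a candidate, the closure of $O^q$ inside $O^p$, but never prove it is compact nor that it sits inside $O^r$. Since that compactness is essentially the whole content of the implication, what you have is a plan rather than a proof. The paper does not redo this work at all: by Theorem \ref{PairNeighbourhoodBase}, Theorem \ref{MeetPreservingEquivalents} applied in $p^\succ$ and Proposition \ref{LocalTightness}, the restriction map $T\mapsto T\cap p^\succ$ is a homeomorphism from $O^p$ onto the tight spectrum $\mathcal{T}(p^\succ)$ of the abstract pseudobasis $p^\succ$, and then both Hausdorffness and the equivalence $q\C r\Leftrightarrow O^q\Subset O^r$ are quoted directly from \cite[Proposition 2.17 and Theorem 2.26]{BiceStarling2020HTight}. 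If you want to lean on the earlier paper (as your last sentence suggests), this restriction homeomorphism is the missing bridge you need to set up; if you want a self-contained argument, you must actually carry out the finite-subcover extraction.

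Two smaller inaccuracies. In the Hausdorff part, tightness of $T\cap p^\succ$ does \emph{not} hand you $t\in T$ with $t\perp s$: condition \eqref{TightChars} produces elements of $s'^\succ\cap F^\perp$, which need not lie in $T$ (tight sets are only closed upwards). The working separation is rather to pick $s''\prec s'\prec s$ in $S$, note that $s'\C p^\succ\setminus T$ by \eqref{Shrinking} (since $s\in p^\succ\setminus T$), and separate $S$ and $T$ by $O^{s''}$ and $O^p_{\{s'\}}$; any $U$ in both would satisfy $p\succ s'\in U$ and $s'\C p^\succ\setminus U$, violating tightness of $U\cap p^\succ$. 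Similarly, your justification of $O^q\subseteq O^r$ is off: it is false that any filter containing $q$ contains $r$ when $q\C r$; it is the local tightness of $T$, not roundness or directedness, that forces $r\in T$ (if $r\notin T$ then $q\D F\prec r$ together with \eqref{CShrinking} gives $q\C p^\succ\setminus T$, contradicting tightness). Your reverse implication via K\"onig's lemma and \cite[Theorem 2.11]{BiceStarling2020HTight} is essentially sound and in fact yields the stronger statement $O^q\subseteq O^r\Rightarrow q\C r$.
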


\begin{proof}
By Theorem \ref{PairNeighbourhoodBase}, $(O^{p,q}_F)^{q,F\prec p}_{F\text{ is finite}}$ forms a basis for $O^p$.  This combined with Theorem \ref{MeetPreservingEquivalents} in $p^\succ$ and Proposition \ref{LocalTightness} shows that $T\mapsto T\cap p^\succ$ is a homeomorphism from $O^p$ onto $\mathcal{T}(p^\succ)$.  But $p^\succ$ is an abstract pseudobasis so the result follows immediately from \cite[Proposition 2.17 and Theorem 2.26]{BiceStarling2020HTight}.
\end{proof}

Recall that a topological space is locally compact/Hausdorff if each point has a neighbourhood base of compact/Hausdorff subsets.  To be both locally compact and locally Hausdorff, it suffices for each point to have a single compact Hausdorff neighbourhood \textendash\, see \cite[Proposition 2.19]{BiceStarling2018}.

\begin{cor}\label{LCLH}
$\mathcal{L}(P)$ is locally compact and locally Hausdorff.
\end{cor}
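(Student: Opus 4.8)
The plan is to invoke \cite[Proposition 2.19]{BiceStarling2018}, by which it suffices to produce, for each $T\in\mathcal{L}(P)$, a single compact Hausdorff neighbourhood. So fix a nonempty locally tight filter $T$. I will find basic open sets $O^q\subseteq O^r$ with $T\in O^q$, $O^q\Subset O^r$ and $O^r$ Hausdorff. Choosing a compact $C$ with $O^q\subseteq C\subseteq O^r$ (which exists by the very definition of $\Subset$), the set $C$ is then a neighbourhood of $T$ (since $O^q$ is open and contains $T$), it is compact, and it is Hausdorff, being a subspace of the Hausdorff space $O^r$ from Theorem \ref{qCrSubset}. That is exactly the neighbourhood required, and it handles local compactness and local Hausdorffness simultaneously.

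To locate suitable $q$ and $r$, I descend within $T$. Since $T$ is nonempty, pick $p\in T$; since $T$ is a round up-set, pick $r\in T$ with $r\prec p$, and then $q\in T$ with $q\prec r$. Transitivity of $\prec$ gives $q\prec p$, so both $q,r\prec p$ — precisely the hypothesis under which Theorem \ref{qCrSubset} yields $q\C r\Leftrightarrow O^q\Subset O^r$. So it remains to verify $q\C r$, which follows from $q\prec r$: \eqref{Shrinking} produces a finite $F$ with $q\C F\prec r$, and since the refinement relation on $\mathscr{P}(P)$ is transitive (because $\prec$ is transitive on $P$) and $\C$ absorbs refinement on the right — both among the basic properties of $\C$ recorded in \cite[Proposition 2.14]{BiceStarling2020HTight} — this gives $q\C r$. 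Finally, $O^q=O^q_\emptyset$ genuinely belongs to the defining subbasis of $\mathcal{L}(P)$, since $\emptyset\C q$ holds trivially (take the empty finite refinement), and $q\in T$ gives $T\in O^q$; likewise $O^r$ is open, and Theorem \ref{qCrSubset} tells us it is a Hausdorff subspace.

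I do not expect any real obstacle: the corollary is essentially a bookkeeping consequence of Theorem \ref{qCrSubset} together with the reduction in \cite[Proposition 2.19]{BiceStarling2018}. The only steps needing a moment's care are the small observation $q\prec r\Rightarrow q\C r$ (via \eqref{Shrinking} and the refinement-absorption property of $\C$) and the remark that each $O^q$ is genuinely open in $\mathcal{L}(P)$; once the chain $q\prec r\prec p$ has been extracted from $T$ using nonemptiness and roundness, everything else is immediate.
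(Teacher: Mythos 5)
Your argument is correct and follows the same route as the paper: extract $q\prec r\prec p$ from $T$ using roundness, apply Theorem \ref{qCrSubset} to get $T\in O^q\Subset O^r$ with $O^r$ Hausdorff, and invoke \cite[Proposition 2.19]{BiceStarling2018}. The only difference is that you spell out the small step $q\prec r\Rightarrow q\C r$, which the paper leaves implicit.
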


\begin{proof}
Any $T\in\mathcal{L}(P)$ contains $p,q,r\in P$ with $q\prec r\prec p$.  By Theorem \ref{qCrSubset}, $T\in O^q\Subset O^r$ which means $T$ has a compact Hausdorff neighbourhood.
\end{proof}

In contrast to $\mathcal{T}(P)$, the entirety of $\mathcal{L}(P)$ is not always Hausdorff.  Indeed this happens precisely when $\mathcal{T}(P)$ and $\mathcal{L}(P)$ coincide, which also characterises bi-pseudobases among local bi-pseudobases.

\begin{thm}\label{HausdorffEquivalents}
The following are equivalent.
\begin{enumerate}
\item\label{LP=TP} $\mathcal{L}(P)=\mathcal{T}(P)$.
\item\label{LPHausdorff} $\mathcal{L}(P)$ is Hausdorff.
\item\label{Pbipseudo} $P$ is an abstract bi-pseudobasis.
\end{enumerate}
\end{thm}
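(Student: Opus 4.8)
The plan is to prove the cycle \ref{Pbipseudo}~$\Rightarrow$~\ref{LP=TP}~$\Rightarrow$~\ref{LPHausdorff}~$\Rightarrow$~\ref{Pbipseudo}. The implication \ref{LP=TP}~$\Rightarrow$~\ref{LPHausdorff} is immediate, since $\mathcal{T}(P)$ is the tight spectrum of the abstract pseudobasis $P$ and hence Hausdorff by \cite{BiceStarling2020HTight}, so $\mathcal{L}(P)=\mathcal{T}(P)$ forces $\mathcal{L}(P)$ to be Hausdorff.

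For \ref{Pbipseudo}~$\Rightarrow$~\ref{LP=TP} I would first check that the two spectra have the same points. If $S\subseteq P$ is tight it is a filter by \eqref{Tight=>Filter}, and it is automatically locally tight: for $t\in S$ and finite $G\subseteq S\cap t^\succ$ one has $\widehat{G\cup\{t\}}=\widehat{G}\cap t^\succ$, which satisfies $\widehat{G}\cap t^\succ\not\C P\setminus S$ by tightness and hence $\widehat{G}\cap t^\succ\not\C t^\succ\setminus S$. Conversely, any nonempty locally tight filter $S$ equals $(S\cap t^\succ)^\prec$ for any $t\in S$, which is tight by \eqref{LocallyTight=>Tight}. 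The topologies then coincide: $O^p_R\subseteq N^p_R$ always, by monotonicity of $\C$ in its second argument, while the reverse inclusion of topologies follows from the localisation argument in the proof of Theorem \ref{PairNeighbourhoodBase} (using \eqref{CShrinking} and \eqref{CMeetPreserving}), which shows every $N^p_R$ containing a point $T$ contains a basic $\mathcal{L}(P)$-neighbourhood of $T$.

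The substance of the theorem is \ref{LPHausdorff}~$\Rightarrow$~\ref{Pbipseudo}, which I would prove contrapositively. Suppose $P$ is not an abstract bi-pseudobasis; by Theorem \ref{MeetPreservingEquivalents} there is a tight $T\subseteq P$ which is not a filter, so we may fix $p,q\in T$ with $T\cap p^\succ\cap q^\succ=\emptyset$. Since $p^\succ$ and $q^\succ$ are abstract bi-pseudobases (by \eqref{LocallyMeetPreserving}, taking the bound to be $p$, respectively $q$) and $T$, being tight, is locally tight, the round up-sets $T_p:=(T\cap p^\succ)^\prec$ and $T_q:=(T\cap q^\succ)^\prec$ are filters, by Theorem \ref{MeetPreservingEquivalents} applied inside $p^\succ$, respectively $q^\succ$; moreover $T_p\cap t^\succ=T\cap t^\succ$ is tight in $t^\succ$ for any $t\in T_p$ with $t\prec p$, so $T_p$, and likewise $T_q$, is locally tight by Proposition \ref{LocalTightness}. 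Hence $T_p,T_q\in\mathcal{L}(P)$, and $p\in T_p\setminus T_q$, $q\in T_q\setminus T_p$, so $T_p\neq T_q$.

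It remains to show that $T_p$ and $T_q$ have no disjoint neighbourhoods, and this is where I expect the main difficulty to lie. I would fix $t\in T_p$ with $t\prec p$ and $t'\in T_q$ with $t'\prec q$, so that by Theorem \ref{PairNeighbourhoodBase} the basic neighbourhoods of $T_p$ and $T_q$ are respectively the $O^{a,t}_F$ with $a,F\prec t$ and the $O^{c,t'}_G$ with $c,G\prec t'$. The key point is that $t\prec p$ forces $t^\succ\setminus T_p=t^\succ\setminus T$ (and similarly $t'^\succ\setminus T_q=t'^\succ\setminus T$), turning the local complements in the neighbourhood conditions into the global complement $P\setminus T$; then \eqref{CShrinking} yields finite $F_0\subseteq t^\succ$ and $G_0\subseteq t'^\succ$ with $F\C F_0$, $G\C G_0$ and $F_0\cup G_0\prec P\setminus T$. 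Since $\{a,c\}\subseteq T$, tightness of $T$ via \eqref{Tight-Round} produces some $y\in\widehat{\{a,c\}}\cap(F_0\cup G_0)^\perp$, and any ultrafilter $S$ containing $y$ is then a locally tight filter containing $a$ and $c$ and disjoint from $F_0\cup G_0$, hence $S\in O^{a,t}_F\cap O^{c,t'}_G$. As the two neighbourhoods were arbitrary, $T_p$ and $T_q$ cannot be separated, so $\mathcal{L}(P)$ fails to be Hausdorff.
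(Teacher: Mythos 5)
Your proof follows essentially the same route as the paper: the same cycle of implications, the same construction (for \ref{LPHausdorff}$\Rightarrow$\ref{Pbipseudo}) of the two inseparable locally tight filters $(T\cap p^\succ)^\prec$ and $(T\cap q^\succ)^\prec$ from a non-filter tight $T$, and the same use of \eqref{Tight-Round} to produce a common point of any two basic neighbourhoods; the identification of points in \ref{Pbipseudo}$\Rightarrow$\ref{LP=TP} via \eqref{Tight=>Filter} and \eqref{LocallyTight=>Tight} is also the paper's argument (you are in fact slightly more careful in verifying that tight sets are locally tight).

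There is one step that does not work as written: in the comparison of topologies you claim that the containment $O^p_R\subseteq N^p_R$, which holds by monotonicity of $\C$ in its second argument, gives one of the two inclusions of topologies. A containment between corresponding generating sets says nothing about which topology is finer; what you actually need is that each subbasic $\mathcal{L}(P)$-open set $O^p_R$ (with $R\C p$) is open in $\mathcal{T}(P)$, and for that you need the \emph{reverse} set containment $N^p_R\subseteq O^p_R$, i.e.\ the equality $O^p_R=N^p_R$ when $R\C p$. This is where the bi-pseudobasis hypothesis enters again: if $R\C p$ and $R\C P\setminus T$, then \eqref{CMeetPreserving} gives $R\C p^\succ\cap(P\setminus T)^\succ\subseteq p^\succ\setminus T$ (using $T=T^\prec$), so $T\in O^p_R$. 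This is exactly the observation the paper makes at this point, and with it your argument is complete; your second inclusion (every $N^p_R$ containing $T$ contains a basic $\mathcal{L}(P)$-neighbourhood of $T$, by the argument of Theorem \ref{PairNeighbourhoodBase}) is fine as stated.
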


\begin{proof}\
\begin{itemize}
\item[\eqref{LP=TP}$\Rightarrow$\eqref{LPHausdorff}]  By \cite[{}Proposition 2.17]{BiceStarling2020HTight}, $\mathcal{T}(P)$ is always Hausdorff.

\item[\eqref{LPHausdorff}$\Rightarrow$\eqref{Pbipseudo}]  If $P$ is not a bi-pseudobasis then we have non-filter $T\in\mathcal{T}(P)$, by Theorem \ref{MeetPreservingEquivalents}.  Take $p,q\in T$ with no lower bound in $T$.  As $T$ is tight, $T\cap p^\succ$ is tight in $p^\succ$ and also a filter in $p^\succ$, as $p^\succ$ is a bi-pseudobasis, so $U=(T\cap p^\succ)^\prec$ is a locally tight filter in $P$.  Likewise, $V=(T\cap q^\succ)^\prec$ is a locally tight filter, which is different from $U$ because $p\in U\setminus V$ and $q\in V\setminus U$.  Take any basic neighbourhoods $O^{s,p}_F$ of $U$ with $s,F\prec p$ and $F\C p^\succ\setminus U$, and $O^{t,q}_G$ of $V$ with $t,G\prec q$ and $G\C q^\succ\setminus V$.  Thus $s,t\in T$ and $F'\cup G'\prec P\setminus T$, where $F'$ and $G'$ are finite sets obtained from \eqref{CShrinking} which satisfy $F\C F'\prec p^\succ\setminus U\subseteq P\setminus T$ and $G\C G'\prec q^\succ\setminus V\subseteq P\setminus T$.  As $T$ is tight, we have $r\prec s,t$ with $r\perp F'\cup G'$ and hence any locally tight filter (e.g. ultrafilter) containing $r$ will lie in $O^{s,p}_F\cap O^{t,q}_G$.  Thus $U$ and $V$ can not be separated by disjoint open sets and hence $\mathcal{L}(P)$ is not Hausdorff.

\item[\eqref{Pbipseudo}$\Rightarrow$\eqref{LP=TP}]  If $P$ is an abstract bi-pseudobasis then every tight subset is a filter, by Theorem \ref{MeetPreservingEquivalents}, i.e. $\mathcal{T}(P)\subseteq\mathcal{L}(P)$.  But also every non-empty locally tight filter is tight, by \eqref{LocallyTight=>Tight}, i.e. $\mathcal{L}(P)\subseteq\mathcal{T}(P)$.  Thus as sets we have $\mathcal{L}(P)=\mathcal{T}(P)$.  Also, as $P$ is an abstract bi-pseudobasis, $F\C p$ implies $O^p_F=N^p_F$ so every open set in $\mathcal{L}(P)$ is open in $\mathcal{T}(P)$.  Conversely, again using the fact that the entirety of $P$ is an abstract bi-pseudobasis and arguing as in the proof of Theorem \ref{PairNeighbourhoodBase} shows that $(O^{p,q}_F)^{p,F\prec q}_{F\text{ is finite}}$ is a basis for $\mathcal{T}(P)$, i.e. $\mathcal{L}(P)=\mathcal{T}(P)$ as topological spaces.
\end{itemize}
\end{proof}

As mentioned in Example \ref{semilatticexpl}, the above results apply in particular when $P=S\setminus\{0\}$ and $\prec$ is the canonical ordering $\leq$ on an inverse semigroup or $\wedge$-semilattice $S$.  In this case, there are several corresponding results in the literature, namely
\begin{enumerate}
\item Corollary \ref{PairBasis} corresponds to \cite[Proposition 4.1]{Lenz2008}.
\item Proposition \ref{UltrafilterDense} corresponds to \cite[Theorem 12.9]{Exel2008}, \cite[Proposition 2.25]{Lawson2012} and \cite[Theorem 5.13]{LawsonLenz2013}.
\item Proposition \ref{UltrafilterNBase} corresponds to \cite[Proposition 2.5]{ExelPardo2016}.
\item Theorem \ref{TrappingEquivalents} corresponds to \cite[Proposition 2.27]{Lawson2012} and \cite[Theorem 5.19]{LawsonLenz2013}.
\item Theorem \ref{HausdorffEquivalents} corresponds to \cite[Proposition 5.2]{LawsonLenz2013}.
\end{enumerate}

Theorem \ref{HausdorffEquivalents} can even be used to recover the Hausdorff characterisation in \cite[Theorem 3.16]{ExelPardo2016}  \textendash\, being a bi-pseudobasis in this case means that, for all $p,q\in P$, $p^\geq\cap q^\geq\C p^\geq\cap q^\geq$.  In Exel's terminology, this is saying that $p^\geq\cap q^\geq$ has a finite cover which, as we are in an inverse semigroup, is the same as saying $(pp^{-1})^\geq\cap(qp^{-1})^\geq$ has a finite cover.  But this last set is just the collection $\mathcal{J}_{qp^{-1}}$ of idempotents below $qp^{-1}$, as in \cite[Theorem 3.16]{ExelPardo2016}.

\section{Concrete Local Bi-Pseudobases}\label{CLBP}

So far we know that any abstract local bi-pseudobasis $(P,\prec)$ can be represented as a collection of open sets $(O^p)_{p\in P}$ in its locally tight spectrum $\mathcal{L}(P)$.  While $(O^p)_{p\in P}$ will not always form a basis of $\mathcal{L}(P)$ (see Theorem \ref{TrappingEquivalents}), it will form a very similar slightly weaker kind of structure that we can axiomatise as follows.

\begin{dfn}
Let $\mathcal{O}(X)$ denote the collection of all non-empty open subsets of a topological space $X$.  We call $P\subseteq\mathcal{O}(X)$ a \emph{concrete local bi-pseudobasis} of $X$ if
\begin{align}
\label{LocallyHausdorff}\tag{Locally Hausdorff}&\text{Every $O\in P$ is a Hausdorff subspace of $X$}.\\
\label{Cover}\tag{Cover}&\text{Every $x\in X$ is contained in some $O\in P$}.\\
\label{PointFilter}\tag{Point-Filter}&\text{The neighborhoods in $P$ of any fixed $x\in X$ form a $\Subset$-filter}.\\
\label{Dense}\tag{Dense}&\text{Every $O\in\mathcal{O}(X)$ contains some $N\in P$}.\\
\label{Separating}\tag{Separating}&\text{The subsets in $P$ distinguish the points of $X$}.
\end{align}
\end{dfn}

Any basis of Hausdorff subsets of a locally compact space satisfies these conditions.  Conversely, any space with a concrete local bi-pseudobasis must be locally Hausdorff, by \eqref{LocallyHausdorff}, and locally compact, by \eqref{PointFilter}, i.e.
\[X\text{ has a concrete local bi-pseudobasis}\quad\Leftrightarrow\quad X\text{ is locally compact locally Hausdorff}.\]
Also, every concrete local bi-pseudobasis is a concrete pseudobasis in the sense of \cite[Definition 2.16]{BiceStarling2020HTight}.  The key extra conditions we are requiring are \eqref{LocallyHausdorff} and \eqref{PointFilter} (for pseudobases, the neighbourhoods in $P$ of any fixed $x\in X$ only have to be $\Subset$-round).

\begin{rmk}
While there is no Hausdorff requirement for each element of a pseudobasis in \cite[Definition 2.16]{BiceStarling2020HTight}, we can only recover $X$ from $\mathcal{T}(P)$ in \cite[{}Theorem 2.45]{BiceStarling2020HTight} when the entirety of $X$ is Hausdorff.  In contrast, even locally Hausdorff $X$ can be recovered from $\mathcal{L}(P)$ \textendash\, see Theorem \ref{Recovery} below.
\end{rmk}

\begin{prp}\label{abstractconcrete}
If $(P,\prec)$ is an abstract local bi-pseudobasis then $(O^p)_{p\in P}$ is a concrete local bi-pseudobasis of its locally tight spectrum $\mathcal{L}(P)$.
\end{prp}

\begin{proof}\
\begin{itemize}
\item[\eqref{LocallyHausdorff}]  See Theorem \ref{qCrSubset}.

\item[\eqref{Cover}]  As every $T\in\mathcal{L}(P)$ is non-empty, we have some $t\in T$ and hence $T\in O^t$.

\item[\eqref{PointFilter}]  If $T\in O^p\cap O^q$ then, as $T$ is a filter, we have $s,t\in T$ with $s\prec t\prec p,q$ and hence $T\in O^s\Subset O^t\subseteq O^p\cap O^q$, by Theorem \ref{qCrSubset}.

\item[\eqref{Dense}]  Any $O\in\mathcal{O}(\mathcal{L}(P))$ contains some non-empty $O^{p,t}_F$, where $p,F\prec t$ and $F$ is finite (see Corollary \ref{PairBasis}).  Take $T\in O^{p,t}_F$ so $p\in T$ and $F\C t^\succ\setminus T$.  By \eqref{CShrinking}, we have finite $G$ with $F\C G\prec t^\succ\setminus T$.  By \eqref{TightChars}, we have $q\in p^\succ\cap G^\perp$ so $O^q\subseteq O^{p,t}_F\subseteq O$.

\item[\eqref{Separating}]  If $T,U\in\mathcal{T}(P)$ are distinct then we have some $t\in T\setminus U$ or $t\in U\setminus T$, i.e. $U\notin O^t\ni T$ or $T\notin O^t\ni U$ so $O_t$ distinguishes $T$ from $U$.
\end{itemize}
\end{proof}

Conversely, any concrete local bi-pseudobasis is an abstract local bi-pseudobasis w.r.t. $\Subset$.  Moreover, we can recover the space from the locally tight spectrum, thus yielding a duality between concrete and abstract local bi-pseudobases.

\begin{thm}\label{Recovery}
If $P$ is a concrete local bi-pseudobasis of $X$ then $(P,\Subset)$ is an abstract local bi-pseudobasis.  Moreover, $X$ is homeomorphic to $\mathcal{L}(P)$ via the map
\[x\mapsto T_x=\{O\in P:x\in O\}.\]
\end{thm}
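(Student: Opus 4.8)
The plan is to split the statement into two halves: first verifying the axioms of an abstract local bi-pseudobasis for $(P,\Subset)$, then establishing that $x\mapsto T_x$ is a homeomorphism onto $\mathcal{L}(P)$. For the first half, I would note that $\Subset$ is automatically transitive, and \eqref{PointFilter} applied pointwise gives roundness: if $x\in O$, the $\Subset$-filter of neighbourhoods of $x$ is nonempty and down-directed, so it contains some $N\Subset O$, whence $O^\Subset\neq\emptyset$. For \eqref{LocalBiPseudobasis}, the key point is that $\Subset$ inside a fixed Hausdorff open set $O\in P$ behaves like compact containment in a locally compact Hausdorff space, where the bi-pseudobasis (weak meet) condition for bases is standard: if $r,s\subseteq O$ with $r'\Subset r$ and $s'\Subset s$, then $\overline{r'}\cap\overline{s'}$ (closures taken in $O$) is a compact subset of $r\cap s$, and by \eqref{Dense} together with local compactness it admits a finite subcover by elements of $P$ each $\Subset$-below both $r$ and $s$; translating this into the abstract $\C$ relation on $P$ gives exactly $(r'^\Subset\cap s'^\Subset)\C(r^\Subset\cap s^\Subset)$. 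The reference \cite[Definition 2.16, Proposition 2.17]{BiceStarling2020HTight} already records that a concrete pseudobasis is an abstract pseudobasis under $\Subset$, so I would lean on that and only supply the extra local bi-pseudobasis verification.

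For the homeomorphism, I would first check that $T_x$ is a nonempty locally tight filter for each $x\in X$. Nonemptiness is \eqref{Cover}; the filter property is precisely \eqref{PointFilter} rephrased via $\Subset$; and local tightness follows because, restricted to any $O\in T_x$, the set $T_x\cap O^\Subset$ is the neighbourhood filter of $x$ in the locally compact Hausdorff space $O$, which is an ultrafilter (hence tight) with respect to compact containment. Injectivity of $x\mapsto T_x$ is \eqref{Separating}. For surjectivity, given a nonempty locally tight filter $T$, pick $O\in T$; then $T\cap O^\Subset$ is a locally tight filter in the abstract local bi-pseudobasis $O^\Subset$, which by the recovery theorem for the Hausdorff case (essentially \cite[Theorem 2.45]{BiceStarling2020HTight} applied to the locally compact Hausdorff space $O$, or the homeomorphism $O^p\cong\mathcal{T}(p^\succ)$ from Theorem \ref{qCrSubset}) must equal the neighbourhood filter of a unique point $x\in O$; one then checks $T=T_x$ using that $T$ is a filter determined by any of its initial segments. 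Continuity and openness of the bijection amount to matching basic open sets: the subbasic set $O^N$ (for $N\in P$) pulls back to $\{x:N\in T_x\}=N$, which is open, and conversely each $N\in P$ is the preimage of $O^N$, so the map is open onto the subbasis $(O^N)_{N\in P}$; since by Corollary \ref{PairBasis} the topology of $\mathcal{L}(P)$ is generated by sets of the form $O^{p,q}_F$ and these are built from the $O^N$ together with the Hausdorff-subspace structure already transported correctly, the two topologies agree.

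The main obstacle I expect is the surjectivity step — showing every locally tight filter $T$ actually comes from a genuine point of $X$. The subtlety is that local tightness only constrains $T$ on each initial segment $O^\Subset$ separately, so one must argue that the points $x_O\in O$ produced for different $O\in T$ are all the same point of $X$; this uses the filter (down-directedness) property of $T$ to compare $x_O$ and $x_{O'}$ inside a common lower bound $O''\in T$, where both must be the unique point whose neighbourhood filter restricts correctly, together with \eqref{Separating} to pin down uniqueness globally. Once the point $x$ is well-defined, verifying $T_x=T$ is routine: $T\subseteq T_x$ because $x\in O$ for every $O\in T$ by construction, and $T_x\subseteq T$ because any $N\ni x$ has, by \eqref{PointFilter}, some $O''\Subset N$ with $O''\ni x$ and $O''$ below an element of $T$, forcing $O''\in T$ (local tightness in that initial segment) and hence $N\in T^\Subset\subseteq T$. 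Compatibility of topologies is then a short subbasis-chasing argument given the work already done in Section \ref{LTF}.
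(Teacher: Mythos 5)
Your proposal follows essentially the same route as the paper: verify the abstract axioms by intersecting closures inside a single Hausdorff element of $P$ and extracting a finite cover by compactness, then establish the homeomorphism by localising to each $O\in P$ and invoking the Hausdorff recovery theorem \cite[Theorem 2.45]{BiceStarling2020HTight} together with the identification of $O^p$ with $\mathcal{T}(p^\Supset)$ from the proof of Theorem \ref{qCrSubset}. Two small corrections are needed. First, to cover the compact set $\overline{r'}\cap\overline{s'}\cap p$ by elements of $P$ lying $\Subset$-below $r\cap s$ you need \eqref{Cover} and \eqref{PointFilter} (each point of the compact set must lie in such an element), not \eqref{Dense}, which only places an element of $P$ somewhere inside a given open set without controlling which points it contains. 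Second, your topology-matching step as primarily stated does not suffice: $(O^N)_{N\in P}$ is in general \emph{not} a subbasis for $\mathcal{L}(P)$ (by Theorem \ref{TrappingEquivalents} it generates the topology only under the trapping condition), so checking that each $O^N$ pulls back to the open set $N$ establishes continuity only with respect to a possibly strictly coarser topology. The correct fix is the one you gesture at in your surjectivity discussion and the one the paper actually uses: for each $p\in P$, both $x\mapsto T_x\cap p^\Supset$ on $p$ and $T\mapsto T\cap p^\Supset$ on $O^p$ are homeomorphisms onto $\mathcal{T}(p^\Supset)$, so $x\mapsto T_x$ is a local homeomorphism; combined with injectivity from \eqref{Separating} and surjectivity from the fact that the $O^p$ cover $\mathcal{L}(P)$, this yields a global homeomorphism with no further subbasis chasing required.
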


\begin{proof}
Certainly $\Subset$ is transitive.  Also $\Subset$ is round on $P$, by \eqref{PointFilter}.  It also follows from \eqref{PointFilter} that the lower preorder of $\Subset$ is just inclusion $\subseteq$.  By \eqref{LocallyHausdorff}, any $p\in P$ is a Hausdorff subspace so $r'\Subset r\subseteq p$ and $s'\Subset s\subseteq p$ means that $\overline{r'}\cap p$ and $\overline{s'}\cap p$ are compact and contained $r$ and $s$ respectively.  As intersections of compact subsets are again compact in Hausdorff spaces, $\overline{r'}\cap\overline{s'}\cap p$ is also compact and contained in $r\cap s$.  For each $x\in\overline{r'}\cap\overline{s'}\cap p$, \eqref{Cover} and \eqref{PointFilter} yield $p_x,q_x\in P$ with $x\in p_x\Subset q_x\Subset r\cap s$.  By compactness, we have a finite $Y\subseteq X$ such that $\overline{r'}\cap\overline{s'}\cap p\subseteq\bigcup_{x\in Y}p_x$.  Thus, by \eqref{PointFilter}, $r'^\Supset\cap s'^\Supset\D F=\{p_x:x\in Y\}$.  Also $G=\{q_x:x\in Y\}\subseteq(r\cap s)^\Supset\subseteq r^\Supset\cap s^\Supset$ and $F\subseteq G^\Supset$ which, by the definition of $\C$, means $r'^\Supset\cap s'^\Supset\C r^\Supset\cap s^\Supset$.  This shows that $(P,\Subset)$ satisfies \eqref{LocalBiPseudobasis}.

Now each $p\in P$ is a locally compact Hausdorff subspace of $X$ with concrete pseudobasis $p^\Supset$.  Thus $x\mapsto T_x\cap p^\Supset$ is homeomorphism from $p$ onto $\mathcal{T}(p^\Supset)$, by \cite[{}Theorem 2.45]{BiceStarling2020HTight}.  But as in noted in the proof of \eqref{qCrSubset}, $T\mapsto T\cap p^\Supset$ is a homeomorphism from $O^p\subseteq\mathcal{L}(P)$ onto $\mathcal{T}(p^\Supset)$.  Thus $x\mapsto T_x$ is a homeomorphism from $p$ onto $O^p$.  As $p$ was arbitrary, this shows that $x\mapsto T_x$ is a local homeomorphism.  But $(O^p)_{p\in P}$ covers $\mathcal{L}(P)$ so this map is also surjective.  By \eqref{Separating}, $x\mapsto T_x$ is also injective and hence a (total) homeomorphism.
\end{proof}

Again we can recover a previous result in the literature as a special case.  Specifically, when $P$ is a basis of compact open bisections (in which case $\Subset$ is just $\subseteq$) of an \'etale groupoid, Theorem \ref{Recovery} yields the topological part of \cite[Theorem 4.8]{Exel2010} (for the full generalisation of \cite[Theorem 4.8]{Exel2010}, see Theorem \ref{GroupoidRecovery} below).

\section{Ordered Groupoids}\label{OG}

Now we move on to a non-commutative extension of our construction by adding some groupoid structure, both to our topological spaces and our local bi-pseudobases.  More precisely, we obtain \'etale groupoids from ordered groupoids.

\begin{rmk}
Exel's original tight groupoid construction uses inverse semigroups, not ordered groupoids.  However, this would necessarily impose a $\wedge$-semilattice structure on the units/idempotents, even though we have not needed any $\wedge$-semilattice structure so far.  To avoid this restriction we use more general ordered groupoids instead, which also turn out to be more suitable for some of the examples we have in mind.  However, ordered groupoids and inverse semigroups are still closely related \textendash\, see Proposition \ref{IS=>OG} below and the comments after.
\end{rmk}

Recall that a \emph{groupoid} $G$ is a small category where all the morphisms are invertible.  As usual, we identify objects with their identity morphisms which we call \emph{units} and denote by $G^{(0)}$.  We also denote composable pairs in $G$ by $G^{(2)}$, i.e.
\begin{align*}
G^{(2)}&=\{(p,q):p,q\in G\text{ and }pq\text{ is defined}\}.\\
G^{(0)}&=\{p\in G:\forall q\in G\ ((p,q)\in G^{(2)}\ \Rightarrow\ pq=q=qp)\}.
\end{align*}
We also denote the range and source maps by $\mathsf{r}$ and $\mathsf{s}$, i.e.
\[\mathsf{r}(p)=pp^{-1}\qquad\text{and}\qquad\mathsf{s}(p)=p^{-1}p.\]

\begin{dfn}\label{OrderedGroupoid}
An \emph{ordered groupoid} $(G,\cdot,{}^{-1},\prec)$ is a groupoid $G$ together with a transitive relation $\prec$ on $G$ that preserves and reflects the formulas $pq$, $p^{-1}$ and $p^{-1}p$, i.e. for all $p,q,r\in G$ with $(p,q)\in G^{(2)}$,
\begin{align}
\label{Product}r&\prec pq&\Leftrightarrow&&\exists p'\prec p\ \exists q'\prec q\ &(r=p'q').\\
\label{Inverse}r&\prec p^{-1}&\Leftrightarrow&&\exists q\prec p\ &(r=q^{-1}).\\
\label{Support}r&\prec pp^{-1}&\Leftrightarrow&&\exists q\prec p\ &(r=qq^{-1}).
\end{align}
\end{dfn}

\begin{rmk}
As mentioned after \cite[\S4.1 Proposition 3]{Lawson1998}, it is important to note that when $G$ is a group this is much stronger than the usual notion \textendash\, ordered groups are usually just required to satisfy the $\Leftarrow$ part of \eqref{Product}.  If $\prec$ is also reflexive then the $\Rightarrow$ part of \eqref{Product} follows, but further imposing \eqref{Inverse} forces $\prec$ to be symmetric and hence an equivalence relation, i.e. a group congruence.  Further imposing \eqref{Support} then forces $\prec$ to be the the equality relation $=$ on the group $G$.
\end{rmk}

Note we are using `ordered' here in the broad sense of an arbitrary transitive relation, while the definition given \cite[\S4.1]{Lawson1998} refers only to partial orders.  In this case they agree, although the phrasing is somewhat different.  Indeed, using the essentially the same kind of arguments as in \cite[\S4.1 Proposition 3 and 4]{Lawson1998}, we can show that it suffices for preservation $(\Leftarrow)$ to hold in \eqref{Product} and reflection $(\Rightarrow)$ to hold in \eqref{Support} (while they amount to the same thing in \eqref{Inverse}).

\begin{prp}\label{OrderedGroupoidEquivalent}
An order $\prec$ makes a groupoid $G$ an ordered groupoid iff
\begin{align}
\label{Product'}p'\prec p\quad\text{and}\quad q'\prec q\qquad&\Rightarrow\qquad p'q'\prec pq\quad(\text{when }(p,q),(p',q')\in G^{(2)}).\\
\label{Inverse'}q\prec p\qquad&\Rightarrow\qquad q^{-1}\prec p^{-1}.\\
\label{Support'}r\prec pp^{-1}\qquad&\Rightarrow\qquad\exists q\prec p\ (r=qq^{-1}).
\end{align}
\end{prp}

\begin{proof}
As $p^{-1-1}=p$, \eqref{Inverse'} immediately yields \eqref{Inverse}.  Also \eqref{Support'} is the $\Rightarrow$ part of \eqref{Support}, while the $\Leftarrow$ part is immediate from \eqref{Product'} and \eqref{Inverse'}.  Likewise, \eqref{Product'} is the $\Leftarrow$ part of \eqref{Product}, so all we need to prove is the $\Rightarrow$ part.  To see this, say $r\prec pq$ so, by \eqref{Inverse'}, $r^{-1}\prec q^{-1}p^{-1}$ and hence, by \eqref{Product'}, $rr^{-1}\prec pqq^{-1}p^{-1}=pp^{-1}$.  By \eqref{Support'}, we have $p'\prec p$ with $rr^{-1}=p'p'^{-1}$.  Thus $p'^{-1}\prec p^{-1}$, by \eqref{Inverse'}, and $p'p'^{-1}r=rr^{-1}r=r\prec pq$ so, by \eqref{Product'}, $p'^{-1}r=p'^{-1}p'p'^{-1}r\prec p^{-1}pq=q$.  Setting $q'=p'^{-1}r$, we get $r=p'p'^{-1}r=p'q'$.  Thus $\prec$ does indeed reflect $pq$ so $G$ is an ordered groupoid.
\end{proof}

One thing \eqref{Support'} immediately tells us is that units are downwards closed, i.e.
\[G^{(0)\succ}\subseteq G^{(0)}.\]
Another thing to note is that the $q$ is \eqref{Support'} is unique.  Indeed, if $q,q'\prec p$ and $e=\mathsf{r}(q)=\mathsf{r}(q')$ then $(q^{-1},q')\in G^{(2)}$ and $q^{-1}q'\prec p^{-1}p$, so we have $p'\prec p$ with $q^{-1}q'=p'^{-1}p'$ and hence $q'=qq^{-1}q'=qp'^{-1}p=q$.  As in \cite[\S4.1]{Lawson1998}, we denote this `restriction' by ${}_e|p$, i.e. ${}_e|p$ is uniquely defined by
\[{}_e|p\prec p\qquad\text{and}\qquad\mathsf{r}({}_e|p)=e.\]
Likewise, if $e\prec p^{-1}p$ then we define $p|_e=({}_e|p^{-1})^{-1}$, so $p|_e$ (uniquely) satisfies
\[p|_e\prec p\qquad\text{and}\qquad\mathsf{s}(p|_e)=e.\]

Here are some basic results on sources of bounded subsets in ordered groupoids.

\begin{prp}
If $(G,\prec)$ is an ordered groupoid and $Q,R\prec p\in G$ then
\begin{gather}
\label{Q-1Q}\mathsf{s}[Q]=Q^{-1}Q=Q^{-1}p^\succ=p^{-1\succ}Q.\\
\label{setminusQ-1Q}\mathsf{s}[p^\succ\setminus Q]=\mathsf{s}(p)^\succ\setminus\mathsf{s}[Q].\\
\label{Q-1Qsucc}\mathsf{s}[Q^\succ]=\mathsf{s}[Q]^\succ.
\end{gather}
\begin{align}
\label{QperpR}Q\perp R\qquad&\Leftrightarrow\qquad\mathsf{s}[Q]\perp\mathsf{s}[R].\\
\label{QDR}Q\D R\qquad&\Leftrightarrow\qquad\mathsf{s}[Q]\D\mathsf{s}[R].\\
\label{QCR}Q\C R\qquad&\Leftrightarrow\qquad\mathsf{s}[Q]\C\mathsf{s}[R].
\end{align}
\end{prp}

\begin{proof}\
\begin{itemize}
\item[\eqref{Q-1Q}]  For any $q\in Q\prec p$ and $r\prec p$, if $(q^{-1},r)\in G^{(2)}$ then $\mathsf{r}(q)=\mathsf{r}(r)=e$ so $q=r={}_{e}|p$ and hence $q^{-1}r=q^{-1}q=\mathsf{s}(q)$.  Thus
\[Q^{-1}p^\succ\subseteq\mathsf{s}[Q]\subseteq Q^{-1}Q\subseteq Q^{-1}p^\succ.\]

\item[\eqref{setminusQ-1Q}] Certainly $\mathsf{s}[p^\succ\setminus Q]\subseteq\mathsf{s}[p^\succ]\subseteq\mathsf{s}(p)^\succ$.  Take $r\prec p$.  If $\mathsf{s}(r)=\mathsf{s}(q)=e$, for some $q\in Q\prec p$, then again $r=q=p|_e$, i.e. $\mathsf{s}(r)\in\mathsf{s}[Q]$ implies $r\in Q$ and hence $r\notin Q$ implies $\mathsf{s}(r)\notin\mathsf{s}[Q]$.  Thus $\mathsf{s}[p^\succ\setminus Q]\subseteq\mathsf{s}(p)^\succ\setminus\mathsf{s}[Q]$.  Conversely, if $e\in\mathsf{s}(p)^\succ\setminus\mathsf{s}[Q]$ then $\mathsf{s}(p|_e)=e\notin\mathsf{s}[Q]$ so $p|_e\notin Q$.  Thus $e=\mathsf{s}(p|_e)\in\mathsf{s}[p^\succ\setminus Q]$.

\item[\eqref{Q-1Qsucc}]  By \eqref{Product} and \eqref{Q-1Q},
\[\mathsf{s}[Q]^\succ\subseteq Q^{-1\succ}Q^\succ=Q^{\succ-1}Q^\succ=\mathsf{s}[Q^\succ]\subseteq\mathsf{s}[Q]^\succ.\]

\item[\eqref{QperpR}]  If $p\in Q^\succ\cap R^\succ$ then certainly $\mathsf{s}(p)\in\mathsf{s}[Q]^\succ\cap\mathsf{s}[R]^\succ$, proving $\Leftarrow$.  Conversely, if $e\prec\mathsf{s}(q)$ and $e\prec\mathsf{s}(r)$, for some $q\in Q$ and $r\in R$, then
\[p|_e=p|_{\mathsf{s}(q)}|_e\prec p|_{\mathsf{s}(q)}=q\qquad\text{and}\qquad p|_e=p|_{\mathsf{s}(r)}|_e\prec p|_{\mathsf{s}(r)}=r,\]
i.e. $p|_e\in q^\succ\cap r^\succ\subseteq Q^\succ\cap R^\succ$, which proves the $\Rightarrow$ part.

\item[\eqref{QDR}]  If $R\perp q\prec Q$ then $\mathsf{s}[R]\perp\mathsf{s}(q)\prec\mathsf{s}[Q]$, by \eqref{QperpR}.  Conversely, if $\mathsf{s}[R]\perp e\prec\mathsf{s}[Q]$ then \eqref{Q-1Qsucc} yields $e=\mathsf{s}(q)$, for some $q\prec Q$.  As $\mathsf{s}[R]\perp e=\mathsf{s}(q)$, \eqref{QperpR} again yields $R\perp q\prec Q$.

\item[\eqref{QCR}]  If $F$ is finite and $R\D F\prec Q$ then $\mathsf{s}[F]$ is finite and \eqref{QDR} yields $\mathsf{s}[R]\D\mathsf{s}[F]\prec\mathsf{s}[Q]$.  If $F'$ is finite and $\mathsf{s}[R]\D F'\prec\mathsf{s}[Q]$ then \eqref{Q-1Qsucc} yields finite $F\prec Q$ with $F'\subseteq\mathsf{s}[F]$ so \eqref{QDR} again yields $R\D F\prec Q$.
\end{itemize}
\end{proof}

Typical examples of ordered groupoids come from inverse semigroups.  Indeed, any inverse semigroup $S$ has a canonical partial order defined by
\[p\leq q\qquad\Leftrightarrow\qquad pp^{-1}=qp^{-1}\]
and by restricting the product $pq$ to the case $p^{-1}p=qq^{-1}$ we immediately obtain an ordered groupoid $(S,\leq)$ \textendash\, see \cite[\S3.1]{Lawson1998}.  More generally, we have the following.

\begin{prp}\label{IS=>OG}
Say $S$ is an inverse semigroup and $\prec$ is a transitive relation on $S$ strengthening $\leq$ and preserving the product and inverse operations, i.e.
\begin{align}
\label{Strengthening}p\prec q\qquad&\Rightarrow\qquad p\leq q.\\
\label{Product''}p\prec p'\quad\text{and}\quad q\prec q'\qquad&\Rightarrow\qquad pq\prec p'q'.\\
\label{Inverse''}p\prec q\qquad&\Rightarrow\qquad p^{-1}\prec q^{-1}.
\end{align}
Then $S^\succ$ is an ordered groupoid w.r.t. $\prec$ under the restricted product.
\end{prp}

\begin{proof}
By \eqref{Product''} and \eqref{Inverse''}, $S^\succ$ is closed under the product and inverse operations and hence forms a groupoid under the restricted product.  Also \eqref{Product''} and \eqref{Inverse''} immediately yield \eqref{Product'} and \eqref{Inverse'}, so the only thing left to prove is \eqref{Support'}.  So take $p,r\in S^\succ$ with $r\prec p^{-1}p$.  Thus we have some $q\in S$ with $p\prec q$ and hence $p\leq q$, by \eqref{Strengthening}, so $p=pp^{-1}p=qp^{-1}p\succ pr$, by \eqref{Product''}.  Again by \eqref{Strengthening}, $r\leq p^{-1}p$ so $r=r^{-1}r=r^{-1}p^{-1}pr$.  Thus we can take $p|_r=pr$, as required.
\end{proof}

\begin{rmk}
Typically, $S$ above will also have a $0$ which we will want to remove so that $(S,\prec)$ can form a non-trivial local bi-pseudobasis.
\end{rmk}

Conversely, given an ordered groupoid $(G,\prec)$, its downwards closed bisections
\[\mathcal{B}^\succ(G)=\{B\subseteq G:B^\succ\subseteq B\text{ and }B^{-1}B,BB^{-1}\subseteq G^{(0)}\}\]
are immediately seen to form an inverse semigroup.  The canonical order here is just inclusion $\subseteq$, which is strengthened by the transitive relation $\pp$ given by
\[B\pp C\qquad\Leftrightarrow\qquad\exists c\in C\ (B\prec c).\]
Moreover, $p\mapsto p^\succ$ maps $G$ to $\mathcal{B}^\succ(G)$ and preserves the ordered groupoid structure, i.e. for any $p,q\in G$, we have $p^{-1\succ}=p^{\succ-1}$, $(pq)^\succ=p^\succ q^\succ$ (if $(p,q)\in G^{(2)}$) and
\begin{equation}\label{psuccppqsucc}
p\prec q\qquad\Rightarrow\qquad p^\succ\pp q^\succ.
\end{equation}
This often allows us to identify $G$ with its image under this map, thus embedding the ordered groupoid $(G,\prec)$ into an ordered semigroup $(S,\pp)$.  However, in general there are some technicalities to consider, e.g. $p\mapsto p^\succ$ may not be injective and, even when it is, the converse to \eqref{psuccppqsucc} may fail.  Also, $\pp$ may fail to preserve the product in $\mathcal{B}^\succ(G)$, i.e. we can have $B\pp B'$ and $C\pp C'$ even though $BC\not\pp B'C'$.

Thus in general, ordered groupoids $(G,\prec)$ are significantly weaker than the `ordered inverse semigroups' $(S,\prec)$ appearing in Proposition \ref{IS=>OG}.  However, in the classical case when $\prec$ is a partial order $\leq$, these technicalities disappear and $(G,\leq)$ can always be embedded in $(\mathcal{B}^\geq(G),\subseteq)$ or the potentially smaller inverse semigroup $S$ it generates in $\mathcal{B}^\geq(G)$.  We can even have $S=G$, i.e. $G$ itself can be considered as inverse semigroup, namely when $G^{(0)}$ is a $\wedge$-semilattice, as shown in the Ehresmann-Schein-Nambooripad Theorem (see \cite[\S4.1 Theorem 8]{Lawson1998}).

\section{The Coset Groupoid}\label{TheCosetGroupoid}

Ultimately, we want to show that if a local bi-pseudobasis is also an ordered groupoid then that groupoid structure naturally passes to the locally tight filters.  First it is instructive to consider not just filters but more general `cosets'.

\begin{center}
\textbf{From now on $(G,\prec)$ is a fixed ordered groupoid.}
\end{center}

\begin{dfn}
We call $A\subseteq G$, \emph{unit-directed} if $\mathsf{r}[A]$ and $\mathsf{s}[A]$ are directed,
\begin{align*}
a,b\in A\quad\text{and}\quad\mathsf{s}(a)\succ\mathsf{s}(b)\qquad&\Rightarrow\qquad a|_{\mathsf{s}(b)}\in A,\qquad\text{and}\\
a,b\in A\quad\text{and}\quad\mathsf{r}(a)\succ\mathsf{r}(b)\qquad&\Rightarrow\qquad{}_{\mathsf{r}(b)}|a\in A.
\end{align*}

We call unit-directed $A\subseteq G$ an \emph{atlas} if $AA^{-1}A\subseteq A$.

We call an atlas $A\subseteq G$ a \emph{coset} if $A^\prec\subseteq A$.
\end{dfn}

These are analogous to the atlases and cosets defined for inverse semigroups in \cite{Lawson1998}.  Indeed, in inverse semigroups with their canonical order, unit-directedness already follows from $AA^{-1}A\subseteq A$ when interpreted with the original unrestricted product.  When we restrict the product to obtain a groupoid, unit-directedness has to be added as an explicit extra condition.

\begin{prp}\label{UniUp}
If $A$ is unit-directed then
\[A^\prec A^{\prec-1}\subseteq(AA^{-1})^\prec\qquad\text{and}\qquad(AA^{-1})^\prec A^\prec\subseteq(AA^{-1}A)^\prec.\]
Moreover, for any $a\in A$,
\begin{align}
\label{tTcap}\mathsf{s}[a^\succ\cap A^\prec]&=\mathsf{s}(a)^\succ\cap(A^{-1}A)^\prec.\\
\label{tTsetminus}\mathsf{s}[a^\succ\setminus A^\prec]&=\mathsf{s}(a)^\succ\setminus(A^{-1}A)^\prec.
\end{align}
\end{prp}

\begin{proof}
Say $a,b\in A^\prec$ and $ab^{-1}$ is defined.  Take $a',b'\in A$ with $a'\prec a$ and $b'\prec b$.  As $A$ is unit-directed, we can take $e\in\mathsf{s}[A]$ with $\mathsf{s}(a'),\mathsf{s}(b')\succ e$ and then $a|_e=a'|_e\in A$ and $b|_e=b'|_e\in A$.  As $ab^{-1}\succ a|_eb|_e^{-1}$, it follows that $ab^{-1}\in(AA^{-1})^\prec$.  As $a$ and $b$ where arbitrary, this shows that $A^\prec A^{\prec-1}\subseteq(AA^{-1})^\prec$.

Say $a\in(AA^{-1})^\prec$, $b\in A^\prec$ and $ab$ is defined.  Take $c,d\in A$ with $a\succ cd^{-1}$ and $b'\in A$ with $b'\prec b$.  As $A$ is unit-directed, we can take $e\in\mathsf{r}[A]$ with $\mathsf{r}(d),\mathsf{r}(b')\succ e$.  Then ${}_e|b={}_e|b'\in A$, ${}_e|d\in A$ and $c|_{\mathsf{s}({}_e|d)}\in A$.  It follows that $ab\succ c|_{\mathsf{s}({}_e|d)}d^{-1}|_e{}_e|b$, showing that $(AA^{-1})^\prec A^\prec\subseteq(AA^{-1}A)^\prec$.

\begin{itemize}
\item[\eqref{tTcap}]  As $A^{\prec-1}A^\prec\subseteq(A^{-1}A)^\prec$,
\[\mathsf{s}[a^\succ\cap A^\prec]\subseteq\mathsf{s}(a)^\succ\cap A^{\prec-1}A^\prec\subseteq\mathsf{s}(a)^\succ\cap(A^{-1}A)^\prec.\]
Conversely, if $e\in\mathsf{s}(a)^\succ\cap(A^{-1}A)^\prec$ then $b^{-1}c\prec e\prec a^{-1}a$, for some $b,c\in A$, and then $a|_e\succ a|_{\mathsf{s}(c)}\in A$ so $a_e\in a^\succ\cap A^\prec$ and $e=\mathsf{s}(a|_e)\in\mathsf{s}[a^\succ\cap A^\prec]$.

\item[\eqref{tTsetminus}]  Using \eqref{setminusQ-1Q} and \eqref{tTcap} we get
\begin{align*}
\mathsf{s}[a^\succ\setminus A^\prec]&=\mathsf{s}[a^\succ\setminus(a^\succ\cap A^\prec)]\\
&=\mathsf{s}(a)^\succ\setminus\mathsf{s}[a^\succ\cap A^\prec]\\
&=\mathsf{s}(a)^\succ\setminus(\mathsf{s}(a)^\succ\cap(A^{-1}A)^\prec)\\
&=\mathsf{s}(a)^\succ\setminus (A^{-1}A)^\prec.
\end{align*}
\end{itemize}
\end{proof}

As $\prec$ is transitive, $A^\prec A^{\prec-1}\subseteq(AA^{-1})^\prec$ yields $(A^\prec A^{\prec-1})^\prec\subseteq(AA^{-1})^\prec$.  As any unit-directed $A$ is round, i.e. $A\subseteq A^\prec$, the reverse inclusion is also immediate so
\[(A^\prec A^{\prec-1})^\prec=(AA^{-1})^\prec.\]
Likewise, for any unit directed $A$, we see that
\[(A^\prec A^{\prec-1}A^\prec)^\prec=((AA^{-1})^\prec A^\prec)^\prec=(A^\prec(A^{-1}A)^\prec)^\prec=(AA^{-1}A)^\prec.\]

\begin{prp}\label{AtlasUp}
If $A$ is an atlas then $A^\prec$ is a coset.
\end{prp}

\begin{proof}
As $\mathsf{r}[A]$ and $\mathsf{s}[A]$ are directed, so are $\mathsf{r}[A^\prec]\subseteq\mathsf{r}[A]^\prec$ and $\mathsf{s}[A^\prec]\subseteq\mathsf{s}[A]^\prec$.  Next say $a,b\in A^\prec$ and $\mathsf{s}(b)\prec\mathsf{s}(a)$.  Take $a',b'\in A$ with $a'\prec a$ and $b'\prec b$ and $e\in\mathsf{s}[A]$ with $\mathsf{s}(a'),\mathsf{s}(b')\succ e$.  Then $a|_{\mathsf{s}(b)}\succ a|_e=a'|_e\in A$ so $a|_{\mathsf{s}(b)}\in A^\prec$.  Likewise $\mathsf{r}(b)\prec\mathsf{r}(a)$ implies ${}_{\mathsf{r}(b)}|a\in A^\prec$.  So $A^\prec$ is unit-directed and hence $A^\prec A^{\prec-1}A^\prec\subseteq(AA^{-1}A)^\prec\subseteq A^\prec$, by Proposition \ref{UniUp}, i.e. as $A$ is an atlas, so is $A^\prec$.  As $\prec$ is transitive, $A^{\prec\prec}\subseteq A^\prec$ so $A^\prec$ is a coset.
\end{proof}

\begin{prp}\label{AtlasProduct}
If $g\in G$ and $A\subseteq G$ is an atlas then so is $Ag^\succ$.
\end{prp}

\begin{proof}
For any $a,b\in A$ and $j,k\prec g$ such that $aj$ and $bk$ are defined, we have $e\in\mathsf{r}[A]$ with $\mathsf{r}(a),\mathsf{r}(b)\succ e$.  Then ${}_e|a\in A$ and ${}_{\mathsf{s}({}_e|a)}|j\prec g$ so $e=\mathsf{r}({}_e|a{}_{\mathsf{s}({}_e|a)}|j)\in\mathsf{r}[Ag^\succ]$ so $\mathsf{r}[Ag^\succ]$ is directed.  We also have $f\in\mathsf{s}[A]$ with $\mathsf{s}(a),\mathsf{s}(b)\succ f$ and hence $\mathsf{r}(j),\mathsf{r}(k)\succ f$.  Thus $j,k\succ{}_f|g$ and hence $\mathsf{s}(j),\mathsf{s}(k)\succ\mathsf{s}({}_f|g)$ and $\mathsf{s}({}_f|g)=\mathsf{s}(a|_f{}_f|g)\in\mathsf{s}[Ag^\succ]$, showing that $\mathsf{s}[Ag^\succ]$ is also directed.  If $\mathsf{r}(b)=\mathsf{r}(bk)\prec\mathsf{r}(aj)=\mathsf{r}(a)$ then ${}_{\mathsf{r}(b)}|a\in A$ and ${}_{\mathsf{r}(bk)}|(aj)={}_{\mathsf{r}(b)}|a{}_{\mathsf{s}({}_{\mathsf{r}(b)}|a)}|g\in Ag^\succ$.  And if $\mathsf{s}(k)=\mathsf{s}(bk)\prec\mathsf{s}(aj)=\mathsf{s}(j)$ then $(aj)|_{\mathsf{s}(bk)}=a|_{\mathsf{s}(b)}k\in Ag^\succ$, showing that $Ag^\succ$ is unit-directed.  Moreover,
\[(Ag^\succ)(Ag^\succ)^{-1}(Ag^\succ)=Ag^\succ g^{\succ-1}A^{-1}Ag^\succ\subseteq A(gg^{-1})^\succ A^{-1}Ag^\succ\subseteq AA^{-1}Ag^\succ\subseteq Ag^\succ,\]
as $AA^{-1}A\subseteq A$, so $Ag^\succ$ is also an atlas.
\end{proof}

\begin{prp}\label{CosetUnitProduct}
If $A$ is a coset and $e\in\mathsf{s}[A]^\prec$ then
\[A=(Ae^\succ)^\prec\qquad\text{and}\qquad(AA^{-1})^\prec=(Ae^\succ A^{-1})^\prec.\]
\end{prp}

\begin{proof}
Certainly $(Ae^\succ)^\prec\subseteq(AG^{(0)})^\prec\subseteq A^\prec\subseteq A$.  Conversely, for any $a\in A$, we have $f\in\mathsf{s}[A]$ with $\mathsf{s}(a),e\succ f$ and then $a\succ a|_f=a|_ff\in Ae^\succ$, showing that $A\subseteq(Ae^\succ)^\prec$.  Now, for the second equality, just note that
\[(AA^{-1})^\prec=((Ae^\succ)^\prec(Ae^\succ)^{-1\prec})^\prec=(Ae^\succ e^\succ A^{-1})^\prec=(Ae^\succ A^{-1})^\prec,\]
by Proposition \ref{UniUp} and Proposition \ref{AtlasProduct}.
\end{proof}

\begin{prp}\label{CosetProducts}
If $A,B\subseteq G$ are cosets with $(A^{-1}A)^\prec=(BB^{-1})^\prec$ and $b\in B$,
\[\emptyset\neq Ab^\succ\subseteq AB\subseteq(Ab^\succ)^\prec.\]
\end{prp}

\begin{proof}
First note that $\mathsf{s}[A]^\prec=\mathsf{r}[B]^\prec$.  Indeed, if $e\in\mathsf{s}[A]\subseteq A^{-1}A\subseteq(BB^{-1})^\prec$, then we have $c,d\in B$ with $e\succ cd^{-1}$.  But then $cd^{-1}\in G^{(0)\succ} \subseteq G^{(0)}$ so $c=cd^{-1}d=d$ so $cd^{-1}\in\mathsf{r}[B]$ and hence $e\in\mathsf{r}[B]^\prec$.  This shows that $\mathsf{s}[A]\subseteq\mathsf{r}[B]^\prec$, while $\mathsf{r}[B]\subseteq\mathsf{s}[A]^\prec$ follows by a symmetric argument.  In particular, we have $a\in A$ with $\mathsf{s}(a)\prec\mathsf{r}(b)$ so $a{}_{\mathsf{s}(a)}|b\in Ab^\succ\neq\emptyset$.

Next note $a|_e\in A$ whenever $a\in A$, $e\prec\mathsf{s}(a)$ and $e\in\mathsf{r}[B]\subseteq\mathsf{s}[A]^\prec$.  Indeed, if $e\succ f\in\mathsf{s}[A]$ then $a|_e\succ a|_f\in A$ so $a|_e\in A^\prec\subseteq A$.  Likewise, ${}_e|b\in B$ whenever $b\in B$, $e\prec\mathsf{r}(b)$ and $e\in\mathsf{s}[A]$.

It follows that if $a\in A$, $c\prec b$ and $ac$ is defined then $c={}_{\mathsf{r}(c)}|b={}_{\mathsf{s}(a)}|b\in B$, showing that $Ab^\succ\subseteq AB$.

Conversely, take $a\in A$ and $c\in B$ where $ac$ is defined.  Further take $e\in\mathsf{s}[B]$ with $\mathsf{s}(b),\mathsf{s}(c)\succ e$.  Then $c|_e\in B$ so $a|_{\mathsf{r}(c|_e)}\in A$.  Moreover,
\[ac\succ a|_{\mathsf{r}(c|_e)}c|_e=a|_{\mathsf{r}(c|_e)}c|_e(b|_e)^{-1}b|_e\subseteq ABB^{-1}b^\succ\subseteq A^\prec(A^{-1}A)^\prec b^\succ\subseteq Ab^\succ,\]
as $A^\prec(A^{-1}A)^\prec\subseteq(AA^{-1}A)^\prec\subseteq A^\prec\subseteq A$.  Thus $ac\in(Ab^\succ)^\prec$, showing that $AB\subseteq(Ab^\succ)^\prec$.
\end{proof}

The above results can also be applied to the opposite groupoid, e.g. if $(A^{-1}A)^\prec=(BB^{-1})^\prec$ and $a\in A$ then Proposition \ref{CosetProducts} applied to the opposite groupoid yields
\[\emptyset\neq(a^\succ B)^\prec=(AB)^\prec.\]

Let $\mathcal{C}(G)$ denote the non-empty cosets, i.e.
\[\mathcal{C}(G)=\{A\subseteq G:A\neq\emptyset\text{ is a coset in }G\}.\]

\begin{thm}
$\mathcal{C}(G)$ forms a groupoid under the inverse $A\mapsto A^{-1}$ and product
\[(A,B)\mapsto(AB)^\prec\qquad\text{when}\qquad(A^{-1}A)^\prec=(BB^{-1})^\prec.\]
\end{thm}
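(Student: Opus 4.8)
The plan is to verify the groupoid axioms directly, leaning on Propositions \ref{UniUp}--\ref{CosetProducts} throughout. For a coset $A$ I extend the source/range notation by writing $\mathsf{s}(A)=(A^{-1}A)^\prec$ and $\mathsf{r}(A)=(AA^{-1})^\prec$, so that the composability condition for a pair $(A,B)$ reads exactly $\mathsf{s}(A)=\mathsf{r}(B)$. Since $\prec$ is both preserved and reflected by the inverse, one has $(X^\prec)^{-1}=(X^{-1})^\prec$ for every $X\subseteq G$; hence $\mathsf{s}(A)^{-1}=\mathsf{s}(A)$, $\mathsf{r}(A)^{-1}=\mathsf{r}(A)$ and $\mathsf{r}(A^{-1})=\mathsf{s}(A)$, the composability relation is symmetric under $(A,B)\mapsto(B^{-1},A^{-1})$, and $(A,A^{-1})$, $(A^{-1},A)$ are always composable.

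First I would check closure under the operations. That $A^{-1}$ is again a non-empty coset is routine: $x\mapsto x^{-1}$ interchanges $\mathsf{r}$ and $\mathsf{s}$ and so preserves unit-directedness, $A^{-1}(A^{-1})^{-1}A^{-1}=(AA^{-1}A)^{-1}\subseteq A^{-1}$, and $A^{-1\prec}=A^{\prec-1}\subseteq A^{-1}$. For the product, assume $\mathsf{s}(A)=\mathsf{r}(B)$ and fix $b\in B$; Proposition \ref{CosetProducts} gives $\emptyset\neq Ab^\succ\subseteq AB\subseteq(Ab^\succ)^\prec$, so $(AB)^\prec=(Ab^\succ)^\prec$, which is a non-empty coset because $Ab^\succ$ is an atlas by Proposition \ref{AtlasProduct} whose downward closure is a coset by Proposition \ref{AtlasUp}. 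In particular $((AB)^\prec)^{-1}=(B^{-1}A^{-1})^\prec$, and $\mathsf{s}(A)$, $\mathsf{r}(A)$ are non-empty cosets (apply the above to the pairs $(A^{-1},A)$ and $(A,A^{-1})$).

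Next come the units and the identity laws. I would show $\mathsf{s}(A)$ is a unit: the ranges and sources of elements of $\mathsf{s}(A)$ again lie in $\mathsf{s}(A)$, so combined with $AA^{-1}A\subseteq A$ one gets $\mathsf{s}(A)\,\mathsf{s}(A)=\mathsf{s}(A)$, whence $\mathsf{s}(\mathsf{s}(A))=\mathsf{r}(\mathsf{s}(A))=\mathsf{s}(A)$, so $\mathsf{s}(A)$ is idempotent and composable with itself; symmetrically for $\mathsf{r}(A)$. The identities $(A\,\mathsf{s}(A))^\prec=A$ and $(\mathsf{r}(A)\,A)^\prec=A$ then follow from Proposition \ref{CosetProducts} (which collapses $A\,\mathsf{s}(A)$ to $Ae^\succ$ for a suitable $e\in\mathsf{s}[A]$) together with Proposition \ref{CosetUnitProduct}; the second is the first applied to $A^{-1}$ and then inverted.

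The main work, and the step I expect to be the genuine obstacle, is associativity. Given $\mathsf{s}(A)=\mathsf{r}(B)$ and $\mathsf{s}(B)=\mathsf{r}(C)$, one must check that $(AB)^\prec$ is composable with $C$, that $A$ is composable with $(BC)^\prec$, and that $((AB)^\prec C)^\prec=(ABC)^\prec=(A\,(BC)^\prec)^\prec$. The equality of these three products follows from transitivity of $\prec$ (which absorbs superfluous $(-)^\prec$'s) together with the identities $(XY)^\prec=(Xy^\succ)^\prec=(x^\succ Y)^\prec$ from Proposition \ref{CosetProducts} and the identity $(gh)^\succ=g^\succ h^\succ$ for composable $g,h$. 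The subtle part is the composability bookkeeping, namely $\mathsf{s}((AB)^\prec)=\mathsf{s}(B)$ and $\mathsf{r}((BC)^\prec)=\mathsf{r}(B)$; these reduce, via Proposition \ref{UniUp}, to absorbing a unit $(B^{-1}B)^\prec$ or $(BB^{-1})^\prec$ inside a product, which is exactly what Proposition \ref{CosetUnitProduct} supplies. With associativity and the identity laws in hand, the remaining cancellation axioms are immediate, e.g. $\big(A^{-1}(AB)^\prec\big)^\prec=\big((A^{-1}A)^\prec B\big)^\prec=\big(\mathsf{s}(A)B\big)^\prec=\big(\mathsf{r}(B)B\big)^\prec=B$, and dually. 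This exhibits $\mathcal{C}(G)$ as a groupoid with unit space $\{\mathsf{s}(A):A\in\mathcal{C}(G)\}$, source map $\mathsf{s}$ and range map $\mathsf{r}$. As indicated, the one delicate point throughout is Step \textnormal{(associativity)}: keeping track of composability while juggling the interaction of the restricted groupoid product with the downward closure $(-)^\prec$.
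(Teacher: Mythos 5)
Your proposal is correct and follows essentially the same route as the paper's proof: both reduce each product $(AB)^\prec$ to $(Ab^\succ)^\prec$ via Proposition \ref{CosetProducts}, invoke Propositions \ref{AtlasProduct} and \ref{AtlasUp} for closure, handle the composability bookkeeping $((AB)^{\prec-1}(AB)^\prec)^\prec=(B^{-1}B)^\prec$ and the unit laws via Proposition \ref{CosetUnitProduct}, and prove associativity through $(a^\succ b^\succ C)^\prec=(ABC)^\prec$. Your write-up is somewhat more explicit about the individual groupoid axioms, but there is no substantive difference.
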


\begin{proof}
Whenever $A,B\in\mathcal{C}(G)$, $(A^{-1}A)^\prec=(BB^{-1})^\prec$ and $a\in A$, Proposition \ref{AtlasUp}, Proposition \ref{AtlasProduct} and Proposition \ref{CosetProducts} yield $(AB)^\prec=(a^\succ B)^\prec\in\mathcal{C}(G)$, i.e. the product is well-defined.
Moreover, in this case Proposition \ref{CosetUnitProduct} yields
\[((AB)^{\prec-1}(AB)^\prec))^\prec=((a^\succ B)^{\prec-1}(a^\succ B)^\prec)^\prec=(B^{-1}a^{\succ-1}a^\succ B)^\prec=(B^{-1}B)^\prec,\]
as $a^{\succ-1}a^\succ=\mathsf{s}(a)^\succ$ and $\mathsf{s}[A]\subseteq\mathsf{r}[B]^\prec$.  So if $C\in\mathcal{C}(G)$ and $(B^{-1}B)^\prec=(CC^{-1})^\prec$ then the product of $(AB)^\prec$ and $C$ is also defined and is given by
\[((AB)^\prec C)^\prec=((ab)^\succ C)^\prec=(a^\succ b^\succ C)^\prec\subseteq(a^\succ BC)^\prec\subseteq(ABC)^\prec\subseteq((AB)^\prec C)^\prec\]
(for any $b\in B$).  Likewise, the product of $A$ and $(BC)^\prec$ is defined and is given by
\[(A(BC)^\prec)^\prec=(ABC)^\prec=((AB)^\prec C)^\prec,\]
i.e. the product is associative.  As $A\in\mathcal{C}(G)$, we immediately see that $A^{-1}\in\mathcal{C}(G)$ too.  Also Proposition \ref{CosetUnitProduct} yields $(A^{-1}AB)^\prec=(\mathsf{s}(a)^\succ B)^\prec=B$, as $\mathsf{s}[A]\subseteq\mathsf{r}[B]^\prec$, so $(A^{-1}A)^\prec$ is a unit in $\mathcal{C}(G)$.  Likewise, $(AA^{-1})^\prec$ is a unit in $\mathcal{C}(G)$ and hence $A^{-1}$ is indeed an inverse of $A$ in $\mathcal{C}(G)$. Thus $\mathcal{C}(G)$ is indeed a groupoid.
\end{proof}

\begin{prp}
$A\in\mathcal{C}(G)$ is a unit in $\mathcal{C}(G)$ iff $A\cap G^{(0)}\neq\emptyset$.
\end{prp}

\begin{proof}
If $A\in\mathcal{C}(G)$ is a unit then $\emptyset\neq\mathsf{r}[A]\subseteq(AA^{-1})^\prec\cap G^{(0)}=A\cap G^{(0)}$.  Conversely, if $e\in A\cap G^{(0)}\subseteq\mathsf{s}[A]$ then Proposition \ref{CosetUnitProduct} and Proposition \ref{CosetProducts} yield $A=(Ae^\succ)^\prec=(AA^{-1})^\prec$ so $A$ is a unit in $\mathcal{C}(G)$.
\end{proof}

We call $I\subseteq G$ an \emph{ideal} if, for all $g\in G$,
\[\tag{Ideal}\mathsf{r}(g)\in I\qquad\Leftrightarrow\qquad g\in I\qquad\Leftrightarrow\qquad\mathsf{s}(g)\in I.\]
Note every ideal is, in particular, a full subgroupoid, i.e.
\[\tag{Full Subgroupoid}g\in I\qquad\Leftrightarrow\qquad\mathsf{s}(g)\in I\quad\text{and}\quad\mathsf{r}(g)\in I.\]

Let $\mathcal{F}(G)$ denote the non-empty filters in $G$.

\begin{prp}
$\mathcal{F}(G)$ forms an ideal in $\mathcal{C}(G)$.
\end{prp}

\begin{proof}
First we show that every filter $A\subseteq G$ is a coset.  For any $a,b\in A$, we have $c\in A$ with $a,b\succ c$.  Then $\mathsf{s}(a),\mathsf{s}(b)\succ\mathsf{s}(c)$ and $\mathsf{r}(a),\mathsf{r}(b)\succ\mathsf{r}(c)$, showing that $\mathsf{s}[A]$ and $\mathsf{r}[A]$ are directed.  If $\mathsf{s}(b)\prec\mathsf{s}(a)$ too then $a|_{\mathsf{s}(b)}\succ a|_{\mathsf{s}(c)}=c\in A$ so $a|_{\mathsf{s}(b)}\in A^\prec\subseteq A$.  Likewise, $\mathsf{r}(b)\prec\mathsf{r}(a)$ implies ${}_{\mathsf{r}(b)}|a\in A$ so $A$ is unit-directed.  For any $a,b,c\in A$ such that $ab^{-1}c$ is defined, we can take $d\in A$ with $a,b,c\succ d$ and then $ab^{-1}c\succ dd^{-1}d=d\in A$.  Thus $ab^{-1}c\in A^\prec\subseteq A$, so $A$ is an atlas and hence a coset, as $A^\prec\subseteq A$.

Next note that if $A$ is directed then so is $Ag^\succ$, for any $g\in G$.  Indeed, if $a,b\in A$ and $aj$ and $bk$ are defined, for some $j$ and $k$ below $g$, then we can take $c\in A$ with $a,b\succ c$ and $aj,bk\succ c{}_{\mathsf{s}(c)}|g\in Ag^\succ$.  Thus if $A\in\mathcal{C}(G)$ and $(AA^{-1})^\prec\in\mathcal{F}(G)$ then, for any $a\in A$, Proposition \ref{CosetProducts} yields $A=(AA^{-1}A)^\prec=(AA^{-1}a^\succ)^\prec$ so $A$ is also directed and hence a filter.
\end{proof}

\section{The Locally Tight Groupoid}\label{LTG}

Now we can complete the non-commutative aspect of our locally tight groupoid construction, showing that the locally tight spectrum $\mathcal{L}(G)$ is not just locally compact but also an \'etale groupoid under the following standing assumption.

\begin{center}
\textbf{Let $(G,\prec)$ be an ordered groupoid and an abstract local bi-pseudobasis}.
\end{center}

First we show that single elements of $G$ naturally act on certain locally tight filters to produce another locally tight filter.

\begin{prp}\label{LocalAction}
If $T\in\mathcal{L}(G)$ and $\mathsf{s}(g)\in\mathsf{r}[T]^\prec$ then $(g^\succ T)^\prec\in\mathcal{L}(G)$.
\end{prp}

\begin{proof}
Take $t\in T$ with $\mathsf{r}(t)\prec\mathsf{s}(g)$ so $p=g|_{\mathsf{r}(t)}t\in(g^\succ T)^\prec$.  For any $q\prec p$, we have $q=g't'$ for $g'\prec g$ and $t'\prec t$.  By Proposition \ref{AtlasProduct}, $g^\succ T$ is unit-directed so
\[\mathsf{s}[p^\succ\setminus(g^\succ T)^\prec]=\mathsf{s}(p)^\succ\setminus((g^\succ T)^{-1}(g^\succ T))^\prec=\mathsf{s}(t)^\succ\setminus(T^{-1}T)^\prec=\mathsf{s}[t^\succ\setminus T],\]
by \eqref{tTsetminus} and Proposition \ref{CosetUnitProduct}.  Then \eqref{QCR} yields
\[q\C p^\succ\setminus(g^\succ T)^\prec\quad\Leftrightarrow\quad\mathsf{s}(t')=\mathsf{s}(q)\C\mathsf{s}[p^\succ\setminus(g^\succ T)^\prec]=\mathsf{s}[t^\succ\setminus T]\quad\Leftrightarrow\quad t'\C t^\succ\setminus T.\]
If $q\in(g^\succ T)^\prec$ then $t'=g'^{-1}q\in g'^{-1}(g^\succ T)^\prec\subseteq T^\prec=T$.  As $T\cap t^\succ$ is tight in $t^\succ$, we have $t'\not\C t^\succ\setminus T$ and hence $q\not\C p^\succ\setminus(g^\succ T)^\prec$.  This shows that $(g^\succ T)^\prec\cap p^\succ$ is tight in $p^\succ$ so $(g^\succ T)^\prec\in\mathcal{L}(G)$, by Proposition \ref{LocalTightness}.
\end{proof}

\begin{cor}
$\mathcal{L}(G)$ forms an ideal in $\mathcal{F}(G)$.
\end{cor}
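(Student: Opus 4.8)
The plan is to verify the ideal condition directly. Since $\mathcal{L}(G)$ consists by definition of non-empty locally tight \emph{filters}, we already have $\mathcal{L}(G)\subseteq\mathcal{F}(G)$, and $\mathcal{F}(G)$ is a groupoid (being a full subgroupoid of $\mathcal{C}(G)$), so it remains to check that for every $A\in\mathcal{F}(G)$ one has $A\in\mathcal{L}(G)$ precisely when its range $(AA^{-1})^\prec$ lies in $\mathcal{L}(G)$, precisely when its source $(A^{-1}A)^\prec$ lies in $\mathcal{L}(G)$. Because $x\mapsto x^{-1}$ restricts, for each $a\in G$, to an order isomorphism of $a^\succ$ onto $a^{-1\succ}$ (by \eqref{Inverse} and \eqref{Inverse'}), tightness transfers along it, so $\mathcal{L}(G)$ is closed under $A\mapsto A^{-1}$; since $(AA^{-1})^\prec$ is the source of $A^{-1}$ in $\mathcal{C}(G)$, it then suffices to establish the single equivalence
\[A\in\mathcal{L}(G)\qquad\Longleftrightarrow\qquad(A^{-1}A)^\prec\in\mathcal{L}(G)\qquad(A\in\mathcal{F}(G)).\]

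Both implications will be driven by Proposition \ref{LocalAction}, together with the coset identities of Propositions \ref{UniUp}, \ref{CosetUnitProduct}, \ref{CosetProducts} and the opposite-groupoid form of the last one recorded after its proof. For the forward direction, fix $a\in A$; since $A$ is a round filter there is $a'\prec a$ in $A$, so $\mathsf{r}(a')\prec\mathsf{r}(a)=\mathsf{s}(a^{-1})$ witnesses $\mathsf{s}(a^{-1})\in\mathsf{r}[A]^\prec$, and Proposition \ref{LocalAction} gives $(a^{-1\succ}A)^\prec\in\mathcal{L}(G)$. Applying the opposite-groupoid form of Proposition \ref{CosetProducts} to the cosets $A^{-1}$ and $A$ (the matching condition being trivially satisfied) identifies $(a^{-1\succ}A)^\prec$ with $(A^{-1}A)^\prec$, as desired. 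For the reverse direction, put $T=(A^{-1}A)^\prec\in\mathcal{L}(G)$ and again fix $a\in A$ with $a'\prec a$ in $A$; then $\mathsf{s}(a')=a'^{-1}a'\in A^{-1}A\subseteq T$ is a unit with $\mathsf{s}(a')\prec\mathsf{s}(a)$, so $\mathsf{s}(a)\in\mathsf{r}[T]^\prec$, and Proposition \ref{LocalAction} gives $(a^\succ T)^\prec\in\mathcal{L}(G)$. Since $T$ is a unit of $\mathcal{C}(G)$, Proposition \ref{CosetProducts} yields $(a^\succ T)^\prec=(AT)^\prec$, which equals the groupoid product $A\cdot T=A$ in $\mathcal{C}(G)$ (equivalently $(AT)^\prec=(A^\prec(A^{-1}A)^\prec)^\prec=(AA^{-1}A)^\prec=A$ using Proposition \ref{UniUp} and the coset axioms); hence $A\in\mathcal{L}(G)$.

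What remains is bookkeeping: confirming that every product written above is a genuine product in $\mathcal{C}(G)$, i.e.\ that the relevant source and range cosets agree. This is immediate from the fact that $(A^{-1}A)^\prec$ and $(AA^{-1})^\prec$ are units, together with the containments of the form $\mathsf{s}[A]\subseteq\mathsf{r}[B]^\prec$ already isolated in the proofs of the coset results. I do not anticipate any real obstacle here; the one step deserving a moment's care is the verification that $\mathsf{s}(a)\in\mathsf{r}[T]^\prec$ in the reverse direction, since this is exactly what licenses the use of Proposition \ref{LocalAction}, and it is settled by the single observation $\mathsf{s}(a')=a'^{-1}a'\in A^{-1}A\subseteq T$.
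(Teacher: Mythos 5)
Your proof is correct and follows essentially the same route as the paper's: both reduce the ideal condition to the single equivalence $A\in\mathcal{L}(G)\Leftrightarrow(A^{-1}A)^\prec\in\mathcal{L}(G)$ and settle it by combining Proposition \ref{LocalAction} with Proposition \ref{CosetProducts} via the identities $(A^{-1}A)^\prec=(a^{-1\succ}A)^\prec$ and $A=(a^\succ(A^{-1}A)^\prec)^\prec$. The only difference is that you make explicit the closure of $\mathcal{L}(G)$ under $A\mapsto A^{-1}$ (handling the range condition), a point the paper leaves implicit.
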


\begin{proof}
It suffices to show that, for any $T\in\mathcal{F}(G)$,
\[T\in\mathcal{L}(G)\qquad\Leftrightarrow\qquad(T^{-1}T)^\prec\in\mathcal{L}(G).\]
For this all we need to do is take $t\in T$ and note that, by Proposition \ref{CosetProducts} and Proposition \ref{LocalAction}, $T\in\mathcal{L}(G)$ implies $(T^{-1}T)^\prec=(t^{-1\succ}T)^\prec\in\mathcal{L}(G)$ and conversely $(T^{-1}T)^\prec\in\mathcal{L}(G)$ implies $T=(T(T^{-1}T)^\prec)^\prec=(t^\succ(T^{-1}T)^\prec)^\prec\in\mathcal{L}(G)$.
\end{proof}

The only thing left to consider is the topology on $\mathcal{L}(G)$ and how it interacts with the groupoid structure.  Recall that a \emph{topological groupoid} is a groupoid together with a topology that makes the inverse and product (when it is defined) continuous.  An \emph{\'etale groupoid} is a topological groupoid such that the source map $\mathsf{s}$ is an open map (and hence a local homeomorphism \textendash\, see \cite[Theorem 5.18]{Resende2007}).

\begin{thm}\label{LGetale}
$\mathcal{L}(G)$ is a locally compact locally Hausdorff \'etale groupoid.
\end{thm}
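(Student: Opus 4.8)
We already know from Corollary \ref{LCLH} that $\mathcal{L}(G)$ is locally compact and locally Hausdorff as a topological space, and from the corollary just before this theorem that $\mathcal{L}(G)$ is a subgroupoid (in fact an ideal in $\mathcal{F}(G)$, hence in $\mathcal{C}(G)$). So the only thing left to prove is that the groupoid operations are compatible with the topology in the strong sense required for an \'etale groupoid: the inverse map is continuous, the product is continuous on $\mathcal{L}(G)^{(2)}$, and the source map $\mathsf{s}$ is open. The strategy is to check each of these against the explicit basis $(O^{p,q}_F)^{p,F\prec q}_{F\text{ finite}}$ from Corollary \ref{PairBasis}, using the source-map identities \eqref{Q-1Q}--\eqref{QCR} from the proposition on sources of bounded subsets and the coset-product identities \eqref{tTcap}--\eqref{tTsetminus} together with Proposition \ref{LocalAction}.

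\textbf{Continuity of the inverse.} First I would show $T\mapsto T^{-1}$ is a homeomorphism. It is an involution, so it suffices to show it is continuous, and since it acts on basic open sets it suffices to identify the preimage of $O^{p,q}_F$. The defining conditions $p\in T^{-1}$ and $F\C q^{\succ}\setminus T^{-1}$ translate, via \eqref{Inverse} and \eqref{QCR} applied to the bijection $r\mapsto r^{-1}$ on $q^{\succ}$, into $p^{-1}\in T$ and $F^{-1}\C (q^{-1})^{\succ}\setminus T$, i.e. $(O^{p,q}_F)^{-1}=O^{p^{-1},q^{-1}}_{F^{-1}}$, which is again basic open. Hence the inverse is continuous, and being its own inverse, a homeomorphism. (One should double check $F^{-1},p^{-1}\prec q^{-1}$, which is immediate from \eqref{Inverse'}.)

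\textbf{$\mathsf{s}$ is open and continuity of the product.} For openness of $\mathsf{s}$, note that units of $\mathcal{L}(G)$ are exactly the locally tight filters meeting $G^{(0)}$, and I would show $\mathsf{s}(O^{p,q}_F)=O^{\mathsf{s}(p),\mathsf{s}(q)}_{\mathsf{s}[F]}$ (intersected with the unit space) using \eqref{Q-1Q}, \eqref{setminusQ-1Q}, \eqref{QCR} and the identity $\mathsf{s}[t^{\succ}\setminus T]=\mathsf{s}(t)^{\succ}\setminus (T^{-1}T)^{\prec}$ from \eqref{tTsetminus}; this exhibits the image of a basic open set as basic open, so $\mathsf{s}$ is open, and by the standard fact (\cite[Theorem 5.18]{Resende2007}) $\mathsf{s}$ is then a local homeomorphism. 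For continuity of the product $(S,T)\mapsto (ST)^{\prec}$ on $\mathcal{L}(G)^{(2)}$: given $(ST)^{\prec}\in O^{p,q}_F$, I would use that $p\in (ST)^{\prec}$ means $p\prec ab$ for some $a\in S$, $b\in T$, so $p=a'b'$ with $a'\prec a$, $b'\prec b$; choosing neighbourhoods $O^{a',\,?}$ of $S$ and $O^{b',\,?}$ of $T$ with small enough $F$-data and invoking Proposition \ref{CosetProducts} (so that $(ST)^{\prec}=(a^{\succ}T)^{\prec}=(Sb^{\succ})^{\prec}$) and Proposition \ref{LocalAction}, I would show the product of any $(S',T')$ in these neighbourhoods lands in $O^{p,q}_F$. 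The bookkeeping for which initial segments and which cover-sets $F$ to pick — so that $F\C (\text{something})\setminus(S'T')^{\prec}$ is forced by the chosen neighbourhood data — is exactly the same kind of \eqref{CShrinking}-and-\eqref{CMeetPreserving} juggling as in the proof of Theorem \ref{PairNeighbourhoodBase}, transported through the source map via \eqref{QCR} and \eqref{tTsetminus}.

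\textbf{Main obstacle.} The genuinely delicate step is continuity of the product. The subtlety is that $O^{p,q}_F$ records not just membership of $p$ but a \emph{local tightness constraint} $F\C q^{\succ}\setminus T$, and one must produce neighbourhoods of $S$ and $T$ whose product automatically satisfies such a constraint for $(S'T')^{\prec}$. This forces one to push the cover relation $\C$ back and forth between $G$ and the unit space along $\mathsf{s}$ (and $\mathsf{r}$), which is precisely why the identities \eqref{QperpR}--\eqref{QCR} and \eqref{tTcap}--\eqref{tTsetminus} were established; the proof is essentially an exercise in assembling these with \eqref{CShrinking}, Proposition \ref{AtlasProduct} and Proposition \ref{LocalAction}. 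Once product continuity is in hand, openness of $\mathsf{s}$ is comparatively routine, and the \'etale conclusion, together with the already-known local compactness and local Hausdorffness, completes the proof.
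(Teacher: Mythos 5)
Your overall skeleton matches the paper's proof: local compactness and local Hausdorffness are quoted from Corollary \ref{LCLH}, continuity of inversion follows from $(O^{p,q}_F)^{-1}=O^{p^{-1},q^{-1}}_{F^{-1}}$, and openness of the source map is obtained by computing $\{(T^{-1}T)^\prec:T\in O^{p,q}_F\}=O^{\mathsf{s}(p),\mathsf{s}(q)}_{\mathsf{s}[F]}$ via \eqref{QCR}, \eqref{tTsetminus} and Proposition \ref{LocalAction} (you should also note, for the surjectivity direction, that one must exhibit a preimage of a given $U\in O^{\mathsf{s}(p),\mathsf{s}(q)}_{\mathsf{s}[F]}$, namely $T=(q^\succ U)^\prec$, which is where Proposition \ref{LocalAction} and Proposition \ref{CosetUnitProduct} actually enter). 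These parts are fine and essentially identical to the paper.

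However, the step you yourself flag as the main obstacle \textemdash\ continuity of the product \textemdash\ is left as a sketch, and the one idea that makes it work is missing. You propose to choose neighbourhoods of $S$ and $T$ each carrying ``small enough $F$-data'' and assert that the bookkeeping is the same juggling as in Theorem \ref{PairNeighbourhoodBase}; but the constraint $F\C q^\succ\setminus(ST)^\prec$ lives on the \emph{product} filter, and the whole difficulty is to convert it into separate constraints on the factors. The paper's device is: first shrink the basic neighbourhood (via Theorem \ref{PairNeighbourhoodBase}) so that $q=tu$ with $t\in T$, $u\in U$; then shrink the cover data to $F\C F'\prec q^\succ\setminus(TU)^\prec$ and \emph{left-translate} it by $t^{-1}$, setting $G=\{t^{-1}|_{\mathsf{r}(f')}f':f'\in F'\}$, checking that $G\prec u^\succ\setminus U$ so that $U\in O^v_G$ with $v=u|_{\mathsf{s}(p)}$. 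The entire cover constraint is thereby loaded onto the second factor, and the first factor needs only the bare neighbourhood $O^t$ with no cover data at all; one then verifies $(T'U')^\prec\in O^{p,q}_F$ for every $T'\in O^t$ and $U'\in O^v_G$ by translating back. Without this translation mechanism it is not clear how your two chosen cover sets would force $F\C q^\succ\setminus(S'T')^\prec$, so as written the argument has a genuine gap at its central point.
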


\begin{proof}
By Corollary \ref{LCLH}, $\mathcal{L}(G)$ is locally compact and locally Hausdorff, we just need to show that $\mathcal{L}(G)$ is also \'etale.  First recall that $(O^{p,q}_F)^{p,F\prec q}_{F\text{ is finite}}\ $ forms a basis for $\mathcal{L}(G)$, by Corollary \ref{PairBasis}.  As $(O^{p,q}_F)^{-1}=O^{p^{-1},q^{-1}}_{F^{-1}}$, inversion is immediately seen to be continuous.  To see that the product is also continuous, take $T,U\in\mathcal{L}(G)$ with $(T^{-1}T)^\prec=(UU^{-1})^\prec$ and $p,F\prec q$ with $F$ finite and $(TU)^\prec\in O^{p,q}_F$.  By Theorem \ref{PairNeighbourhoodBase}, we can replace $O^{p,q}_F$ with a smaller basic open set if necessary to get $q=tu$, for some $t\in T$ and $u\in U$.  By \eqref{CShrinking}, we can take finite $F'$ with $F\C F'\prec q^\succ\setminus(TU)^\prec$.  As $\mathsf{s}(p)\in(TU)^{\prec-1}(TU)^\prec=(U^{-1}U)^\prec$ and $\mathsf{s}(p)\prec\mathsf{s}(q)=\mathsf{s}(u)$, \eqref{tTcap} yields $\mathsf{s}(p)\in\mathsf{s}[u^\succ\cap U]\subseteq\mathsf{s}[U]$ so $v=u|_{\mathsf{s}(p)}\in U$.  Let
\[G=\{t^{-1}|_{\mathsf{r}(f')}f':f'\in F'\}\prec t^{-1\succ}(q^\succ\setminus(TU)^\prec)\subseteq u^\succ\setminus U\]
(for the last inclusion, note that if we had $t'\prec t$ and $q'\prec q$ with $t'^{-1}q'\in U$ then $\mathsf{s}(t')=\mathsf{r}(t'^{-1}q')\in\mathsf{r}[U]\subseteq\mathsf{s}[T]^\prec$ so $t'=t|_{\mathsf{s}(t')}\in T$ and $q'=t't'^{-1}q'\in(TU)^\prec$).  Thus $U\in O^v_G$.  For any other $T'\in O^t$ and $U'\in O^v_G$ with $(T'^{-1}T')^\prec=(U'U'^{-1})^\prec$, Proposition \ref{CosetProducts} yields
\[(T'U')^\prec=(t^\succ U')^\prec\in O^{t|_{\mathsf{r}(v)}v}=O^{t|_{\mathsf{r}(v)}u|_{\mathsf{s}(p)}}=O^p.\]
We further claim that $(t^\succ U')^\prec\in O^q_F$.  As $F\C F'$, it suffices to show that $F'\subseteq q^\succ\setminus(t^\succ U')^\prec$.  To see this just note that if we had $f'\in F'\cap(t^\succ U')^\prec$ then we would have $t^{-1}|_{\mathsf{r}(f')}f'\in U'$ which, as $t^{-1}|_{\mathsf{r}(f')}f'\in G$, contradicts $U'\in O^v_G$.  Thus $(T'U')^\prec\in O^{p,q}_F$ which, as $T'$ and $U'$ were arbitrary, shows that $(TU)^\prec\in O^tO^v_G\subseteq O^{p,q}_F$.  This shows that the product is continuous.

To show that $T\mapsto(T^{-1}T)^\prec$ is an open map, it suffices to show that, for any finite $F$ with $p,F\prec q$,
\[\{(T^{-1}T)^\prec:T\in O^{p,q}_F\}=O^{\mathsf{s}(p),\mathsf{s}(q)}_{\mathsf{s}[F]}.\]
If $T\in O^{p,q}_F$ then $p\in T$ so certainly $\mathsf{s}(p)\in(T^{-1}T)^\prec$.  Also $F\C q^\succ\setminus T$ so
\[\mathsf{s}[F]\C\mathsf{s}[q^\succ\setminus T]=\mathsf{s}(q)^\succ\setminus(T^{-1}T)^\prec,\]
by \eqref{QCR} and \eqref{tTsetminus}.  Thus $(T^{-1}T)^\prec\in O^{\mathsf{s}(p),\mathsf{s}(q)}_{\mathsf{s}[F]}$.  Conversely, if $U\in O^{\mathsf{s}(p),\mathsf{s}(q)}_{\mathsf{s}[F]}$ then $p=pp^{-1}p\in(q^\succ U)^\prec\in\mathcal{L}(G)$, by Proposition \ref{LocalAction}.  Also \eqref{CosetUnitProduct} yields $U=(T^{-1}T)^\prec$, where $T=(q^\succ U)^\prec$, so \eqref{tTsetminus} yields
\[\mathsf{s}[F]\C\mathsf{s}(q)^\succ\setminus U=\mathsf{s}(q)^\succ\setminus(T^{-1}T)^\prec=\mathsf{s}[q^\succ\setminus T].\]
Thus $F\C q^\succ\setminus T$, by \eqref{QCR}, and hence $T\in O^{p,q}_F$, as required.
\end{proof}

\begin{rmk}
Originally, Exel only considered inverse semigroups $S$ with $0$ in their canonical order $\leq$, where the idempotents $E(S)$ necessarily form a $\wedge$-semilattice (see \cite[\S1.4 Proposition 8]{Lawson1998}).  Taking $(G,\prec)=(S\setminus\{0\},\leq)$, it follows that $G^{(0)}=E(S)\setminus\{0\}$ is a bi-pseudobasis so the unit space $\mathcal{L}(G)^{(0)}$ is necessarily Hausdorff, by Theorem \ref{HausdorffEquivalents}.  However, as we are considering more general ordered groupoids, even our unit space $\mathcal{L}(G)^{(0)}$ may not be Hausdorff, only locally Hausdorff.
\end{rmk}

We also immediately see that the representation $(O^p)_{p\in G}$ of $G$ in $\mathcal{L}(G)$ is not just a concrete local bi-pseudobasis, as shown in Proposition \ref{abstractconcrete}, but also a subgroupoid of the groupoid of all open bisections of $\mathcal{L}(G)$ as, whenever $\mathsf{s}(p)=\mathsf{r}(q)$,
\[(O^p)^{-1}=O^{p^{-1}}\qquad\text{and}\qquad O^pO^q=O^{pq}.\]
It only remains to show that we can reverse this process and recover an \'etale groupoid from a given subgroupoid of bisections, as in the following result.  This generalises \cite[Theorem 4.8]{Exel2010}, as the unit space $X^{(0)}$ is no longer required to be totally disconnected (or even Hausdorff and, moreover, instead of inverse semigroup bases, we consider more general ordered groupoid local bi-pseudobases).

\begin{thm}\label{GroupoidRecovery}
If $G$ is an ordered subgroupoid of bisections which is also a concrete local bi-pseudobasis of an \'etale groupoid $X$ then $X$ is isomorphic to $\mathcal{L}(G)$ via
\[x\mapsto T_x=\{O\in G:x\in O\}.\]
\end{thm}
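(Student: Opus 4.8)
The topological content is already available: by Theorem \ref{Recovery}, the map $\varphi\colon X\to\mathcal{L}(G)$, $x\mapsto T_x$, is a homeomorphism, where the relation on $G$ is compact containment $\Subset$. Within the standing hypothesis of this section $(G,\Subset)$ is moreover an ordered groupoid, so $G$ is closed under inversion, under defined products, and under the restrictions $p|_e$ and ${}_e|p$ (for $e\Subset\mathsf{s}(p)$, resp.\ $e\Subset\mathsf{r}(p)$). Thus the plan is simply to check that $\varphi$ is also a groupoid homomorphism; it will then automatically be an isomorphism of topological, hence étale, groupoids. Preservation of inversion is immediate from $O\in T_x\Leftrightarrow O^{-1}\in T_{x^{-1}}$, which gives $\varphi(x)^{-1}=T_x^{-1}=T_{x^{-1}}=\varphi(x^{-1})$.

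Next I would pin down sources (ranges being symmetric): the claim is that the $\mathcal{C}(G)$-source $(T_x^{-1}T_x)^\prec$ of $\varphi(x)$ equals $\varphi(\mathsf{s}(x))=T_{\mathsf{s}(x)}$. Both sides lie in $\mathcal{L}(G)$ --- the left one because $\mathcal{L}(G)$ is an ideal in $\mathcal{F}(G)$, which is an ideal in $\mathcal{C}(G)$ --- so it suffices to prove set equality. For ``$\subseteq$'', each member of the set product $T_x^{-1}T_x$ is an open bisection containing $x^{-1}x=\mathsf{s}(x)$ (the factor from $T_x$ paired with $x^{-1}$ is forced to be $x$), hence so is each member $\Subset$-above it; thus $(T_x^{-1}T_x)^\prec\subseteq\{B\in G:\mathsf{s}(x)\in B\}=T_{\mathsf{s}(x)}$. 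For ``$\supseteq$'', given $E\in G$ with $\mathsf{s}(x)\in E$, use \eqref{Cover} to pick $O\in G$ with $x\in O$; then $\mathsf{s}(O)=O^{-1}O\in G$ is a $G$-neighbourhood of $\mathsf{s}(x)$, so \eqref{PointFilter} yields $E'\in G$ with $\mathsf{s}(x)\in E'\Subset\mathsf{s}(O)$ and $E'\Subset E$. The restriction $O|_{E'}\in G$ then contains $x$ and satisfies $(O|_{E'})^{-1}(O|_{E'})=E'$, so $E'\in T_x^{-1}T_x$ with $E'\Subset E$, giving $E\in(T_x^{-1}T_x)^\prec$. Consequently $(x,y)\in X^{(2)}$ iff $\mathsf{s}(x)=\mathsf{r}(y)$ iff $T_{\mathsf{s}(x)}=T_{\mathsf{r}(y)}$ (injectivity of $\varphi$) iff $(T_x^{-1}T_x)^\prec=(T_yT_y^{-1})^\prec$, i.e.\ iff $(\varphi x,\varphi y)\in\mathcal{L}(G)^{(2)}$.

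For composable $x,y$ it then remains to check $\varphi(xy)=\varphi(x)\varphi(y)=(T_xT_y)^\prec$, again by proving set equality (both sides once more lying in $\mathcal{L}(G)$). As above, every member of $T_xT_y$ is an open bisection containing $xy$, so $(T_xT_y)^\prec\subseteq T_{xy}$. For the reverse inclusion, given $M\in G$ with $xy\in M$, pick $O\in G$ with $x\in O$ via \eqref{Cover}; since $\mathsf{r}(M)=MM^{-1}$ and $\mathsf{r}(O)=OO^{-1}$ are $G$-neighbourhoods of $\mathsf{r}(x)$, \eqref{PointFilter} gives $D\in G$ with $\mathsf{r}(x)\in D\Subset\mathsf{r}(M)$ and $D\Subset\mathsf{r}(O)$. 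Then ${}_D|O\in G$ contains $x$, the product $N_0:=({}_D|O)^{-1}({}_D|M)\in G$ is defined and contains $x^{-1}(xy)=y$, and $({}_D|O)\,N_0={}_D|M\Subset M$; as $({}_D|O)\in T_x$ and $N_0\in T_y$, this shows $M\in(T_xT_y)^\prec$. Hence $\varphi$ is a groupoid isomorphism, which together with Theorem \ref{Recovery} finishes the proof.

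The main obstacle, as the above indicates, is the partial product on $G$: two elements $O,N\in G$ need not have a defined product, so none of these identities is a naive comparison of set products. Instead one is forced to descend to common restrictions (${}_D|O$, $O|_{E'}$, and so on), which exist precisely because $(G,\Subset)$ is an ordered groupoid, and to shrink first via \eqref{PointFilter} so that the restriction lies below the prescribed target. One must also keep in mind that $\mathcal{L}(G)$ is only locally Hausdorff, not Hausdorff, so there is no shortcut through agreement on a dense set: each coset identity has to be established on the nose.
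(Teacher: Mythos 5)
Your proof is correct, and its overall strategy coincides with the paper's: invoke Theorem \ref{Recovery} for the homeomorphism, dispose of inversion via $T_x^{-1}=T_{x^{-1}}$, and then verify the product formula $T_{xy}=(T_xT_y)^\Subset$ by restricting a given $M\in T_{xy}$ and a witness $O\in T_x$ to a common unit $D$ obtained from \eqref{PointFilter} \textendash\, your factorisation $({}_D|O)\bigl(({}_D|O)^{-1}({}_D|M)\bigr)={}_D|M$ is literally the paper's ${}_e|q\,q^{-1}|_e\,{}_e|p={}_e|p$. The one place you genuinely diverge is the composability step \eqref{G2}: the paper proves the equivalence $\mathsf{s}(x)=\mathsf{r}(y)\Leftrightarrow(T_x^{-1}T_x)^\Subset=(T_yT_y^{-1})^\Subset$ directly, with the forward direction by mutual $\Subset$-refinement of $T_x^{-1}T_x$ and $T_yT_y^{-1}$ and the converse by a separation argument using \eqref{Separating} to produce a unit $f$ lying in one source coset but not the other. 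You instead establish the sharper identity $(T_x^{-1}T_x)^\Subset=T_{\mathsf{s}(x)}$ and deduce the equivalence from the injectivity of $x\mapsto T_x$ (which already encapsulates \eqref{Separating}). Your route is slightly cleaner and has the added benefit of identifying the unit coset of $T_x$ concretely as the image of $\mathsf{s}(x)$, making explicit that the homeomorphism restricts to the unit spaces; the paper's route avoids having to prove the reverse inclusion $T_{\mathsf{s}(x)}\subseteq(T_x^{-1}T_x)^\Subset$ (your $E'$ construction) at the cost of a separate converse argument. Both are complete; no gaps.
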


\begin{proof}
By Theorem \ref{Recovery}, $x\mapsto T_x$ is a homeomorphism, we just need to show that it is also a groupoid isomorphism.  As $T_x^{-1}=T_{x^{-1}}$, we only need to show that
\begin{equation}\label{G2}
\mathsf{s}(x)=\mathsf{r}(y)\qquad\Leftrightarrow\qquad(T_x^{-1}T_x)^\Subset=(T_yT_y^{-1})^\Subset
\end{equation}
and, in this case, $T_{xy}=(T_xT_y)^\Subset$.  So take $x,y\in X$ with $\mathsf{s}(x)=\mathsf{r}(y)=z$.  For any $p\in T_x$ and $q\in T_y$, we have $\mathsf{s}[p],\mathsf{r}[q]\in T_z$.  By \eqref{PointFilter}, we have $e\in T_z$ with $\mathsf{s}[p],\mathsf{r}[q]\Supset e$.  As $G$ is an ordered subgroupoid of the groupoid of all bisections,
\[p|_e=\{w\in p:\mathsf{s}(w)\in e\}\in G\qquad\text{and}\qquad{}_e|q=\{w\in q:\mathsf{r}(w)\in e\}\in G.\]
and hence $p|_e\in T_x$ and ${}_e|q\in T_y$.  Then $\mathsf{s}[p]\Supset e=\mathsf{r}[{}_e|q]\in T_yT_y^{-1}$ and $\mathsf{r}[q]\Supset e=\mathsf{s}[p|_e]\in T_x^{-1}T_x$, i.e. $T_x^{-1}T_x\subseteq(T_yT_y^{-1})^\Subset$ and $T_yT_y^{-1}\subseteq(T_x^{-1}T_x)^\Subset$ and hence $(T_x^{-1}T_x)^\Subset=(T_yT_y^{-1})^\Subset$.

Conversely, if $\mathsf{s}(x)\neq\mathsf{r}(y)$ then \eqref{Separating} yields $e\in G^{(0)}$ which contains precisely one element of $\{\mathsf{s}(x),\mathsf{r}(y)\}$.  Say $\mathsf{s}(x)\notin e\ni \mathsf{r}(y)$.  By \eqref{Cover}, we have some some $q\in T_y$ and then \eqref{PointFilter} yields $f\in T_{\mathsf{r}(y)}$ with $e,\mathsf{r}[q]\Supset f$.  As $G$ is an ordered subgroupoid, ${}_f|q\in T_y$ so $f=\mathsf{r}[{}_f|q]\in(T_yT_y^{-1})^\Subset$ even though $f$ does not contain $\mathsf{s}(x)$ and so $f\notin(T_x^{-1}T_x)^\Subset$.  This completes the proof of \eqref{G2}.

If $G\ni p\Supset qr$, for some $q\in T_x$ and $r\in T_y$, then certainly $xy\in p$ and hence $p\in T_{xy}$.  Conversely, for any $p\in T_{xy}$, we have $\mathsf{r}[p]\in T_{\mathsf{r}(x)}$.  Taking any $q\in T_x$, we also have $\mathsf{r}[q]\in T_{\mathsf{r}(x)}$.  By \eqref{PointFilter}, we have some $e\in T_{\mathsf{r}(x)}$ with $\mathsf{r}[p],\mathsf{r}[q]\Supset e$.  As $G$ is an ordered subgroupoid, ${}_e|q\in T_x$, ${}_e|p\in T_{xy}$ and hence $q^{-1}|_e{}_e|p\in T_y$.  Thus $p\Supset{}_e|p={}_e|qq^{-1}|_e{}_e|p\in T_xT_y$ so $p\in(T_xT_y)^\Subset$, showing that $T_{xy}=(T_xT_y)^\Subset$.
\end{proof}

This finishes our locally tight groupoid construction.  The order theoretic approach taken here is in line with similar approaches to Paterson's universal groupoid in \cite{Lenz2008} and \cite{LawsonMargolisSteinberg2013} and Exel's tight groupoid in \cite{LawsonLenz2013}.  We feel this is the simplest approach, which also makes it clear that the topology on the resulting groupoid depends only on the order structure.  However, there is an alternative partial action approach which would be more in line with Paterson and Exel's original constructions (see \cite{ExelPardo2016}), as we briefly outline here.

Specifically, instead of considering the locally tight spectrum $\mathcal{L}(G)$ on the entirety of $G$, one restricts to the units $\mathcal{L}(G^{(0)})$.  For each $T\in\mathcal{L}(G^{(0)})$, an equivalence relation $\sim_T$ is defined on $\mathsf{s}^{-1}[T]$ by
\[g\sim_Th\qquad\Leftrightarrow\qquad\exists e\in T\ (g|_e=h|_e).\]
The \emph{germ} of $g$ at $T$ is $[g,T]=g^{\sim_T}\times\{T\}$.  To turn these germs into a groupoid, for each $g\in G$, a map $\beta_g$ from $O^{\mathsf{s}(g)}\subseteq\mathcal{L}(G^{(0)})$ onto $O^{\mathsf{r}(g)}\subseteq\mathcal{L}(G^{(0)})$ is defined by
\[\beta_g(T)=(g^\succ Tg^{-1\succ})^\prec.\]
The product of germs is then defined by
\[[g,T][h,U]=[gh,U]\qquad\text{when}\qquad T=\beta_h(U).\]
The topology on germs is generated by
\[\Theta(g,O)=\{[g,T]:T\in O\},\]
for open $O\subseteq O^{\mathsf{s}(g)}\subseteq\mathcal{L}(G^{(0)})$.  Then the resulting groupoid of germs $\mathcal{G}(G)$ is isomorphic to $\mathcal{L}(G)$ (both algebraically and topologically) via the map
\[[g,T]\mapsto(g^\succ T)^\prec\]
(to see that this map is a homeomorphism, note that it takes any basic open set $\Theta(g,O^{d,e}_F)\subseteq\mathcal{G}(G)$ to the basic open set $O^{g|_d,g|_e}_{(g|_f)_{f\in F}}\subseteq\mathcal{L}(G)$, while its inverse takes $O^{g,h}_F$, where $g,F\prec h$, to $\Theta(h,O^{\mathsf{s}(g),\mathsf{s}(h)}_{\mathsf{s}[F]})$).

\newpage

\bibliographystyle{spmpsci}
\bibliography{maths}

\end{document}